\newtheorem{theorem}{Theorem}[section]
\newtheorem*{theorem*}{Theorem}
\newtheorem{lemma}[theorem]{Lemma} 
\newtheorem{definition}[theorem]{Definition}
\newtheorem{remark}[theorem]{Remark}
\title[Counting filter restricted paths ]{Counting filter restricted paths in $\mathbb{Z}^2$  lattice }
\author{Olga Postnova}
\address{O.P.: Euler International Mathematical Institute, Laboratory of Mathematical Problems of Physics, St. Petersburg Department of Steklov Institute of Mathematics, Saint Petersburg, Fontanka river emb. 27,
191023 Saint Petersburg,
Russia}
\email{postnova.olga@gmail.com}
\author{Dmitry Solovyev}
\address{D.S.: Yau Mathematical Sciences Center, Tsinghua University, Beijing 100084, China}
\email{dimsol42@gmail.com}
\begin{document}

\maketitle

\begin{abstract}
We derive a path counting formula for two-dimensional lattice path model on a plane with filter restrictions. A filter is a line  that restricts the path passing it to one of possible directions. Moreover, each path that touches this line is assigned a special weight. The periodic filter restrictions are motivated by the problem of tensor power decomposition for representations of  quantum $\mathfrak{sl}_2$ at roots of unity. Our main result is the explicit formula for the weighted number of paths from the origin to a fixed point between two filters in this model.
\end{abstract}

\tableofcontents

\section*{Introduction}\label{intro}
Counting lattice paths is one of the central problems in combinatorics \cite{Kratt}. It provides a powerful tool for the problems arising in representation theory of Lie algebras such as counting lattice paths in Weyl chambers (\cite{GM}, \cite{G}, \cite{TZ}). In this paper we count paths on Bratteli diagram\cite{B} with restrictions of two types which we call filters. This problem is motivated by the structures arising in representation theory of quantum groups at roots of unity (\cite{RT}, \cite{And}). Lattice path model explored in the present paper serves as a prototype for models, where weighted numbers of paths reproduce recurrence relations for multiplicities in tensor product decomposition of representations for different versions of quantized universal enveloping algebras of Lie algebra $\mathfrak{sl}_2$ at roots of unity, mainly, $U_q({\mathfrak{sl}_2})$ with divided powers and small quantum group $u_q({\mathfrak{sl}_2})$. The full treatment of representation theoretic part and asymptotic analysis will be carried out in \cite{LPRS}.

\begin{figure}[h!]
	\centerline{\includegraphics[width=220pt]{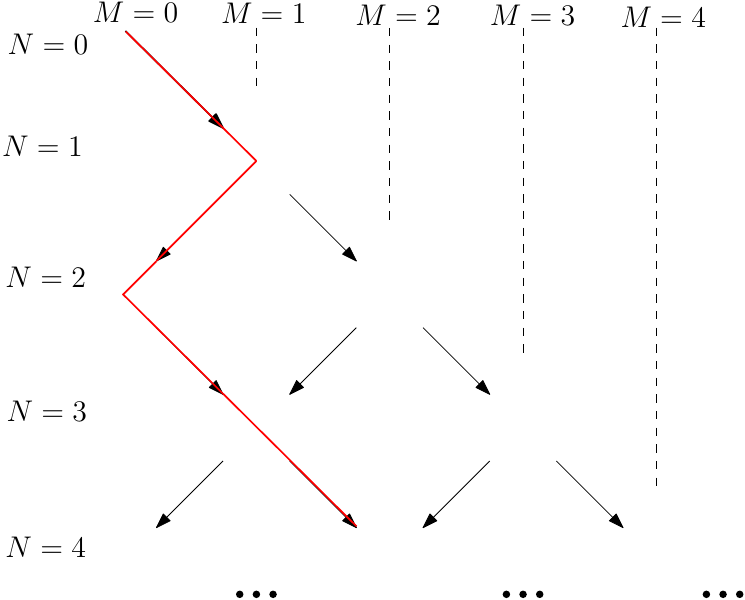}}
	\caption{Bratteli diagram and a lattice path.}
	\label{B}
\end{figure}

An example of a path on Bratteli diagram is shown on Figure \ref{B}. It is well known that if there is no constraint for paths to go to negative $M$ then the number of lattice paths arriving to point $(M,N)$ from $(0,0)$ is
\begin{equation*}
   \binom{N}{\frac{N-M}{2}}
\end{equation*}
for $N\geq M\geq 0$. Since in Bratteli diagram paths can not go into the region with $M<0$ the number of paths arriving to point $(M,N)$ from the origin is given by the reflection principle (see Section \ref{wall}):
\begin{equation*}
   \binom{N}{\frac{N-M}{2}}-\binom{N}{\frac{N-M}{2}-1}.
\end{equation*}
A similar method was introduced by André \cite{Andre}  in 19th century to solve the two candidate ballot problem  by counting unfavorable records and subtracting them from the total number of records. The term 'reflection principle' was attributed to André  in the books of M. Feller \cite{F} and J. L. Doob \cite{D}. For the detailed history of the reflection principle, see \cite{R}. The reflection principle has been one of the key elements in various lattice path models providing explicit enumerative formulas. It  was generalized by Gessel and Zeilberger \cite{GZ} to lattice walks on  Weyl chambers, which are the regions preserved under the actions of Weyl reflection groups.  The above mentioned case corresponds to counting paths in Weyl chamber of type $A_1$.\par
In this paper we will introduce a class of restrictions  on paths on lattice $\mathcal{L}=\{(n,m)| n+m=0\; \text{mod} \;2\}\subset \mathbb{Z}^2$ motivated by tensor product rule of indecomposable modules of $U_q({\mathfrak{sl}_2})$ with divided powers and small quantum group $u_q({\mathfrak{sl}_2})$. Our main result is the explicit formula for the number of lattice paths with periodic filter restrictions.\par
The paper is organized as follows. In Sections \ref{notations}, \ref{maintheorem} and \ref{unrestrict} we give the description of the lattice path model and formulate the main theorem. In Section \ref{onerestrict} we define wall and filter restrictions and recall the reflection principle. In Section \ref{wallfilter} we will reduce the problem of counting paths between the wall and the filter to a problem of counting paths between two lines. In Sections \ref{twofilters}, \ref{walltwofilters} we will prove theorems for path counting in the presence of two filters and two filters together with the wall. The proof of the main theorem is given in Section \ref{multfilt}. In Section \ref{conclusions} we hint at possible applications of considered lattice path models to representation theory of quantum groups at roots of unity.

\subsection*{Acknowledgements} We are grateful to A. Lachowska, N. Reshetikhin, F. Petrov and C. Krattenthaler for many valuable discussions. The work was supported by the Ministry of Science and Higher Education of the Russian Federation (agreement no. 075–15–2022–287). The work of D.S. was supported by the Xing Hua Scholarship program of Tsinghua University.\par

\section{Notations} \label{notations}
In this paper we will use notations following \cite{Kratt}. Throughout this paper we will be considering the lattice
$$\mathcal{L}=\{(n,m)| n+m=0\; \text{mod} \;2\}\subset \mathbb{Z}^2,$$
and the set of steps $\mathbb{S}=\mathbb{S}_L\cup \mathbb{S}_R$, where
$$\mathbb{S}_R=\{(x,y)\to(x+1,y+1)\},\quad \mathbb{S}_L=\{(x,y)\to(x-1,y+1)\}.$$
A \textit{lattice path} $\mathcal{P}$ in $\mathcal{L}$ is a sequence $\mathcal{P}=(P_0,P_1,
\dots, P_m)$ of points $P_i=(x_i,y_i)$ in $\mathcal{L}$ with starting point $P_0$ and the endpoint $P_m$. 
The pairs $P_0\to P_1, P_1\to P_2\dots P_{m-1}\to P_m$ are called steps of $\mathcal{P}$.

Given starting point $A$ and endpoint $B$, a set $\mathbb{S}$ of steps and
a set of restrictions $\mathcal{C}$ we write
$$L(A\to B\;|\;\mathcal{C})$$
for the set of all lattice paths from $A$ to $B$ that have steps from $\mathbb{S}$ and obey the restrictions from $\mathcal{C}$. Since we only consider the set of steps $\mathbb{S}$ we will omit it from the notations. We will denote the number of paths in this set as
$$|L(A\to B\;|\;\mathcal{C})|.$$ 

The set of restrictions $\mathcal{C}$ that are considered will be called "filter restrictions" because they forbid steps in certain directions and provide other steps with non uniform weights. To each step from $(x,y)$ to $(\tilde{x},\tilde{y})$  we will assign the weight  function $\omega:\mathbb{S}\longrightarrow \mathbb{R}_{>0}$ and use notation  $(x,y)\xrightarrow[]{\omega}(\tilde{x},\tilde{y})$  to denote that  the step  from $(x,y)$ to $(\tilde{x},\tilde{y})$ has the weight $\omega$. By default, all unrestricted steps from $\mathbb{S}$ will have weight $1$  and will denoted by an arrow with no number at the top.
 The \textit{weight} of a path $\mathcal{P}$ is defined as the product
$$\omega(\mathcal{P})=\prod_{i=0}^{m-1}\omega(P_i\to P_{i+1}).$$

For the set $L(A\to B\;|\;\mathcal{C})$ we define the \textit{weighted number of paths} as
$$Z(L(A\to B\;|\;\mathcal{C}))=\sum_{\mathcal{P}}\omega(\mathcal{P}),$$
where the sum is taken over all paths $\mathcal{P}\in L(A\to B\;|\;\mathcal{C})$. This is a partition function of a random walk originated in $A$ and ending at $B$ with
$$Prob(\mathcal{P})=\frac{\omega(\mathcal{P})}{Z(A\to B\;|\;\mathcal{C})}.$$
When $\omega(s)=1$ for all of the allowed steps,
$$Z(L(A\to B;\mathbb{S}\;|\;\mathcal{C}))=|L(A\to B;\mathbb{S}\;|\;\mathcal{C})|.$$

\section{Main theorem}	
\label{maintheorem}
We will be interested in counting weighted number of paths in the set of paths on $\mathcal{L}$ with steps from $\mathbb{S}$ that start at $(0,0)$ and end at $(M,N)$ in the presence of the arrangement of restrictions which we will call the \textit{left wall restriction} $\mathcal{W}_0^L$, the \textit{filter restriction} $\mathcal{F}_{l-1}^1$ \textit{of type 1} and the \textit{filter restrictions} $\mathcal{F}_{nl-1}^2$ \textit{of type 2}, where $l$ is a fixed parameter $l\in\mathbb{Z}\geq 2$, and $n=2,3,\dots$ (see Figure \ref{fill}). The upper index denotes the type of restriction and the lower index denotes its position on $\mathcal{L}$.
We will denote this set of paths by 
\begin{equation*}
L_N((0,0)\to (M,N)\;|\;\mathcal{W}_0^L, \mathcal{F}_{l-1}^1,\{\mathcal{F}_{(n+1)l-1}^{2}\}, n\in \mathbb{Z}_+).
\end{equation*}

The \textit{left wall restriction} $\mathcal{W}^L_0$ is located at $x=0$ and implies that at points $(0,y)$ only one step is allowed:
$$
\mathcal{W}^L_{0}=\{(0,y)\to (1,y+1)\}.
$$

The \textit{filter restriction} $\;\mathcal{F}_{l-1}^{1}$ \textit{of type 1} is located at $x=l-1$ and implies that at $x=l-1,\;l$ only the following steps are allowed:
$$
\mathcal{F}_{l-1}^{1}=\{(l-1,y)\to(l,y+1),\; (l,y+1)\to(l+1,y+2),\; (l,y+1)\xrightarrow[]{2}(l-1,y+2)\}.
$$

The \textit{filter restriction} $\mathcal{F}_{nl-1}^{2}$  \textit{of type 2} is located at $x=nl-1$ and implies that at $x=nl-2,\;nl-1,\;nl$ only the following steps are allowed:
\begin{eqnarray*}
\mathcal{F}_{nl-1}^{2}&=&\{(nl-2,y-1)\xrightarrow[]{2}(nl-1,y),\; (nl-2,y-1)\to(nl-3,y),\\
& &(nl-1,y)\to(nl,y+1),\; (nl,y+1)\to(nl+1,y+2),\; (nl,y+1)\xrightarrow[]{2}(nl-1,y+2)\}.
\end{eqnarray*}

We will denote by \textit{multiplicity function in the $j$-th strip} the weighted number of paths in this set  with the endpoint $(M,N)$ that lies within  $(j-1)l-1 \leq M< jl-2$
\begin{equation*}
	M^j_{(M,N)} = Z(L_N((0,0)\to (M,N)\;|\;\mathcal{W}_0^L, \mathcal{F}_{l-1}^1,\{\mathcal{F}_{(n+1)l-1}^{2}\}, n\in \mathbb{Z}_+)),
\end{equation*}
where $M\geq 0$, $j\geq 2$ and $j=\Big[\frac{M+1}{l}+1\Big]$.

The main result is the Theorem \ref{tj}:
\begin{theorem*}
	The multiplicity function in the $j$-th strip  is given by
	\begin{eqnarray*}
	M^{j}_{(M,N)} = 2^{j-2}\Big(\sum_{k=0}^{\big[\frac{N-(j-1)l+1}{4l}\big]}P_j(k) F^{(N)}_{M+4kl}+\sum_{k=0}^{\big[\frac{N-jl}{4l}\big]}P_j(k)F^{(N)}_{M-4kl-2jl} -\\-\sum_{k=0}^{\big[\frac{N-(j+1)l+1}{4l}\big]}Q_j(k)F^{(N)}_{M+2l+4kl}-\sum_{k=0}^{\big[\frac{N-(j+2)l}{4l}\big]}Q_j(k)F^{(N)}_{M-4kl-2(j+1)l} \Big),
	\end{eqnarray*}
	where
	\begin{equation*}
	P_j(k)=\sum_{i=0}^{\big[\frac{j}{2}\big]}\binom{j-2}{2i}\binom{k-i+j-2}{j-2},\;\;\;\;\;
	Q_j(k)=\sum_{i=0}^{\big[\frac{j}{2}\big]}\binom{j-2}{2i+1}\binom{k-i+j-2}{j-2},
	\end{equation*}
\begin{equation*}
F_M^{(N)}= {N\choose\frac{N-M}{2}}-{N\choose{\frac{N-M}{2}-1}}.
\end{equation*}
\end{theorem*}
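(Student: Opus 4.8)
The plan is to prove the formula by induction on the strip index $j$, treating the filters one at a time exactly as they are introduced in the preceding sections. The base case $j=2$ is the two-filters-with-wall count established in Section \ref{walltwofilters}, and the substance of the argument lies in the inductive step, where one passes from strip $j$ to strip $j+1$ by inserting the type-2 filter $\mathcal{F}_{jl-1}^2$ that separates them. The factor $2^{j-2}$ is simply the accumulated product of the forward weights, namely $1$ for $\mathcal{F}_{l-1}^1$ and a factor $2$ for each of the $j-2$ type-2 filters crossed in reaching strip $j$, so the induction really concerns only the bracketed signed sum.

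First I would record the local recursion produced by the new filter. A path whose endpoint lies in strip $j+1$ must make its final crossing of $\mathcal{F}_{jl-1}^2$ as a forward step of weight $2$; before that crossing it has either remained in strip $j+1$ or bounced back into strip $j$ across the same filter, each bounce again carrying weight $2$. Unfolding these bounces by the reflection principle of Section \ref{onerestrict}, i.e. by the reduction to paths between two lines carried out in Section \ref{wallfilter}, rewrites the restricted count in strip $j+1$ as an alternating sum of the wall-reflected counts $F^{(N)}_{\bullet}$ whose arguments run over the images of $M$ under the infinite dihedral group generated by the two filters bounding the strip. Because consecutive filters sit at lattice distance $2l$, the translation subgroup of this group acts by $4l$, which is the source of the shifts $M\pm 4kl$ and of the dual shifts $M-4kl-2jl$ and $M+2l+4kl$ appearing in the four sums of the statement.

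Second, I would track how the image multiplicities evolve. Splitting images by the parity of the number of filter-reflections undergone produces the two families weighted by $P_j(k)$ (even) and $Q_j(k)$ (odd), and inserting one further filter yields the coupled Pascal-type recursion $Q_{j+1}(k)=\sum_{k'\le k}\bigl(P_j(k')+Q_j(k')\bigr)$ and $P_{j+1}(k)=\sum_{k'\le k}\bigl(P_j(k')+Q_j(k'-1)\bigr)$, where the inner summation records the new compositions created by the extra filter and the index shift in $Q_j(k'-1)$ encodes the displacement of the image. Substituting the claimed closed forms and applying the hockey-stick identity $\sum_{k'\le k}\binom{k'-i+j-2}{j-2}=\binom{k-i+j-1}{j-1}$ together with the parity collapse $\binom{j-2}{2i}+\binom{j-2}{2i+1}=\binom{j-1}{2i+1}$ from Pascal's rule advances the top arguments from $j-2$ to $j-1$ and closes the induction for $P_j$ and $Q_j$ simultaneously.

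The main obstacle will be the simultaneous bookkeeping of the four image families and the verification that the cut-offs generated by the induction are exactly the floors $\bigl[\tfrac{N-(j-1)l+1}{4l}\bigr]$, $\bigl[\tfrac{N-jl}{4l}\bigr]$, $\bigl[\tfrac{N-(j+1)l+1}{4l}\bigr]$ and $\bigl[\tfrac{N-(j+2)l}{4l}\bigr]$. Each floor is forced by the requirement that the reflected argument $M'$ still satisfy $|M'|\le N$ so that $F^{(N)}_{M'}\neq 0$; I would derive these bounds from the image positions, evaluating $M$ at the boundary of the strip so that all nonzero terms are captured uniformly across it, and then confirm that no lattice point is produced by two distinct reflection words, reusing the sign-and-parity analysis of the reflection group already carried out in the two-filter base case. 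Matching these four truncated sums term by term with the statement, and checking that the polynomial identities in $k$ persist for every $j$, is the delicate endgame; everything else reduces to the binomial manipulations above.
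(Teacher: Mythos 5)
Your proposal follows essentially the same route as the paper: induction on the strip index $j$ with the wall-plus-two-filters count of Section \ref{walltwofilters} as the base case, an inductive step that unfolds the new type-2 filter by the reflection principle (the translated two-filter lemma), the factor $2^{j-2}$ accounted for by the forward weights, and the identical coupled recursions $P_{j+1}(k)=\sum_{n\le k}P_j(n)+\sum_{n\le k-1}Q_j(n)$, $Q_{j+1}(k)=\sum_{n\le k}\bigl(P_j(n)+Q_j(n)\bigr)$ closed by Pascal's rule and the hockey-stick identity, exactly as in the paper. The only cosmetic difference is that the paper organizes the inductive step in two explicit stages (first dropping the right filter to get the doubled count past $\mathcal{F}_{jl-1}^2$, then imposing boundary conditions on $x=jl-1$ and reinserting $\mathcal{F}_{(j+1)l-1}^2$), which your bounce-unfolding description compresses but does not alter.
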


\begin{figure}[h!]
	\centerline{\includegraphics[width=350pt]
		{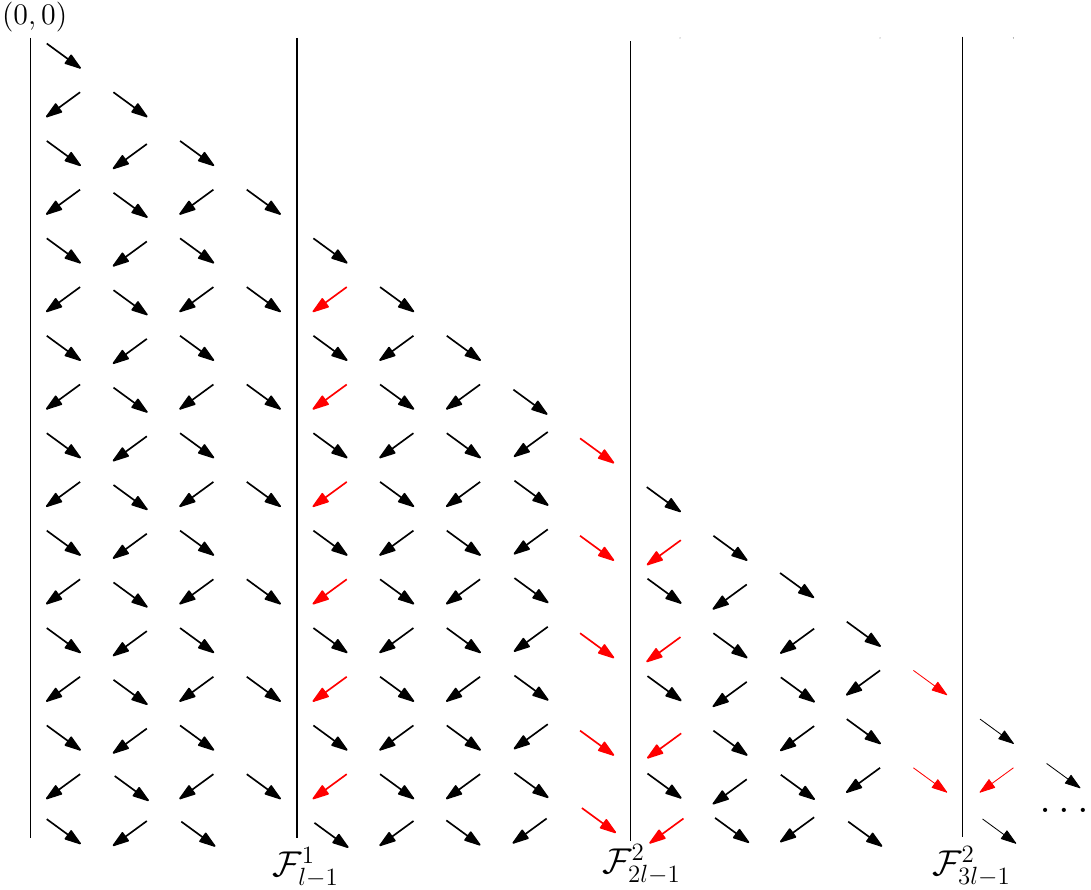}
	}
	\caption{The arrangement of restrictions on $\mathcal{L}$: the wall $\mathcal{W}_0^L$, the filters $\mathcal{F}_{l-1}^1$ and the filters $\mathcal{F}_{nl-1}^2$, where $n=2,3,\dots$ and $l=5$. Red arrows correspond to steps with weight 2.
	}
	\label{fill}
\end{figure}

\section{Unrestricted lattice paths} 
\label{unrestrict}
In this section we will recall general formulas for unrestricted paths counting. For further details see \cite{Kratt}.  
Below we will use the notation  $$L(A\to B)$$  for the  set of unrestricted  paths  from $A$ to $B$  on $\mathcal{L}$ with the set of steps $\mathbb{S}$. An example of unrestricted path  on $\mathcal{L}$ is shown on Figure \ref{sk}. 

\begin{figure}[h!]
	\centerline{\includegraphics[height=160pt]{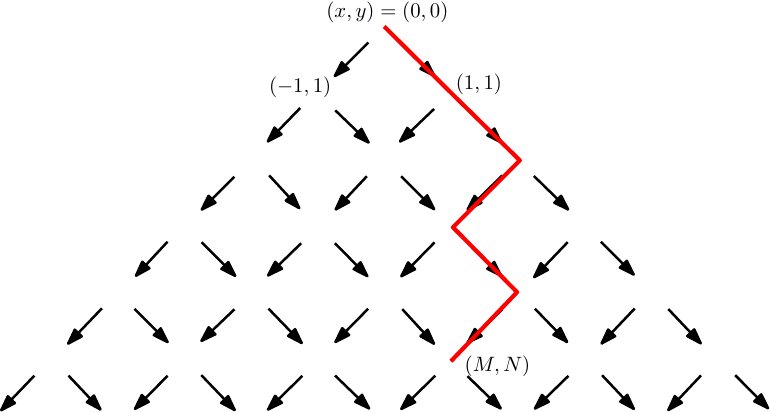}}
	\caption{Path in $L((0,0)\to (M,N))$.}
	\label{sk}
\end{figure}

\begin{lemma}
\label{l1}
The number of paths in the set $L((0,0)\to (M,N))$ is given by 
$$|L((0,0)\to (M,N))|={N\choose\frac{N-M}{2}}$$
\begin{proof}
$|L((0,0)\to (M,N))|$ satisfies the same recurrence relations and initial conditions as ${N\choose\frac{N-M}{2}}$.
\end{proof}
\end{lemma}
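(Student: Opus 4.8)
The plan is to reduce the count to a single binomial coefficient by directly encoding each path as a sequence of steps. First I would observe that every step in $\mathbb{S}$ raises the $y$-coordinate by exactly $1$, so any path from $(0,0)$ to $(M,N)$ consists of precisely $N$ steps, each of which is either a right step from $\mathbb{S}_R$ or a left step from $\mathbb{S}_L$. Writing $r$ for the number of right steps and $\ell$ for the number of left steps, the endpoint constraints give $r+\ell=N$ and $r-\ell=M$, hence $r=\frac{N+M}{2}$ and $\ell=\frac{N-M}{2}$. This forces $N-M$ to be even, which is consistent with $(M,N)\in\mathcal{L}$, and forces $0\le\ell\le N$; otherwise the set of paths is empty and the binomial coefficient vanishes, so the two sides still agree.

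The key step is then a bijection: a path is uniquely determined by the positions of its left steps among the $N$ steps, since once these are fixed the $x$-increments of the remaining steps are determined. This gives a bijection between $L((0,0)\to(M,N))$ and the collection of $\ell$-element subsets of $\{1,\dots,N\}$, whence the count is $\binom{N}{\ell}=\binom{N}{\frac{N-M}{2}}$.

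Alternatively, matching the Pascal-triangle structure of the lattice suggested by Figure \ref{sk}, I would argue by induction on $N$ using the recurrence obtained by conditioning on the last step: the final step into $(M,N)$ comes either from $(M-1,N-1)$ or from $(M+1,N-1)$, so that $f(M,N)=f(M-1,N-1)+f(M+1,N-1)$, where $f(M,N)=|L((0,0)\to(M,N))|$. Since $\frac{(N-1)-(M-1)}{2}=\frac{N-M}{2}$ and $\frac{(N-1)-(M+1)}{2}=\frac{N-M}{2}-1$, this is exactly Pascal's rule $\binom{N}{k}=\binom{N-1}{k}+\binom{N-1}{k-1}$ with $k=\frac{N-M}{2}$. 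Together with the base case $f(0,0)=1=\binom{0}{0}$ and the vanishing of both quantities outside the range $|M|\le N$, the induction forces the two to coincide everywhere.

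Since the combinatorics here is entirely standard, there is no substantial obstacle; the only points requiring minor care are the boundary bookkeeping, namely checking that the empty path set and the out-of-range binomial coefficients both vanish, and confirming that the parity condition defining $\mathcal{L}$ is automatically preserved by the steps in $\mathbb{S}$, so that no reachable lattice point is overlooked.
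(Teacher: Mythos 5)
Your proposal is correct. The paper's own proof is the one-line observation that $|L((0,0)\to(M,N))|$ satisfies the same recurrence and initial conditions as $\binom{N}{\frac{N-M}{2}}$ --- which is precisely your second, ``alternative'' argument via Pascal's rule $f(M,N)=f(M-1,N-1)+f(M+1,N-1)$, so that part matches the paper exactly (and spells out the boundary bookkeeping the paper leaves implicit). Your primary argument is genuinely different: a direct bijection between paths and $\ell$-element subsets of $\{1,\dots,N\}$ recording the positions of the left steps, with $\ell=\frac{N-M}{2}$ determined by solving $r+\ell=N$, $r-\ell=M$. The bijective route is self-contained and gives the closed form in one stroke without induction; the recurrence route is the one that generalizes naturally to the rest of the paper, where the reflection principle and the filter restrictions are all analyzed through local step-by-step recursions rather than global encodings. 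Either argument suffices here, and your handling of the degenerate cases (empty path set versus vanishing binomial coefficient, parity of $N-M$) is careful and correct.
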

Due to the translation invariance we can also count lattice paths originated at the point $(i,j)$  because we have a natural bijection:
$$L((0,0)\to (M,N))\simeq L((i,j)\to (M+i,N+j)).$$
Therefore
$$|L((0,0)\to (M,N))|=|L((i,j)\to (M+i,N+j))|.$$

\section{Counting paths with one restriction}
\label{onerestrict}
In this section we will count the number of paths from $A$ to $B$ on $\mathcal{L}$ with the set of steps $\mathbb{S}$ with one restriction $\mathcal{C}$. Denote such set of paths as
$$L(A\to B|\;\mathcal{C}).$$
\subsection{Wall restriction}
\label{wall}
\begin{definition} For lattice paths that start at $(0,0)$ we will say that $\mathcal{W}^L_d$ with $d\leq 0$ is a \textit{left wall restriction} (relative to $x=0$) if at points $(d,y)$ paths are allowed to take steps of type $\mathbb{S}_R$ only:
$$\mathcal{W}^L_{d}=\{(d,y)\to (d+1,y+1)\}.$$
 
Similarly, we will say that $\mathcal{W}^R_{d}$ with $d\geq 0$  is a \textit{right wall restriction} (relative to $x=0$) if in points $(d,y)$ paths are allowed to take  steps from $\mathbb{S}_L$ only:
$$\mathcal{W}^R_{d}=\{(d,y)\to (d-1,y+1)\}.$$ 
\end{definition}  
\begin{lemma}
\label{l2}
The number of paths from $(0,0)$ to $(M,N)$ with the set of steps $\mathbb{S}$ and one wall restriction $\mathcal{W}^L_{a}$ or $\mathcal{W}^R_{b}$ can be expressed via the number of unrestricted paths as

\begin{equation}\label{lw}
|L((0,0)\to (M,N)\;|\;\mathcal{W}^L_a)|={N\choose\frac{N-M}{2}}-{N\choose\frac{N-M}{2}+a-1},\;\;\text{ for}\;\; M\geq a,
\end{equation}
\begin{equation}\label{rw}
|L((0,0)\to (M,N)\;|\;\mathcal{W}^R_{b})|={N\choose\frac{N-M}{2}}-{N\choose\frac{N-M}{2}+b+1}, \;\;\text{ for}\;\; M\leq b.
\end{equation}

\begin{proof}
Let us give a brief proof of the first statement via the reflection principle \cite{Kratt}. The proof of the second statement is completely similar.

In order to enumerate $|L((0,0)\to (M,N)\;|\;\mathcal{W}^L_{a})|$ we embed it into the bigger set of unrestricted paths with steps $\mathbb{S}_R\cup\mathbb{S}_R$ which originate at $(0,0)$ and $(2(a-1),0)$ as is shown on Figure \ref{lwf}.

\begin{figure}[h!]
	\centerline{\includegraphics[height=200pt]{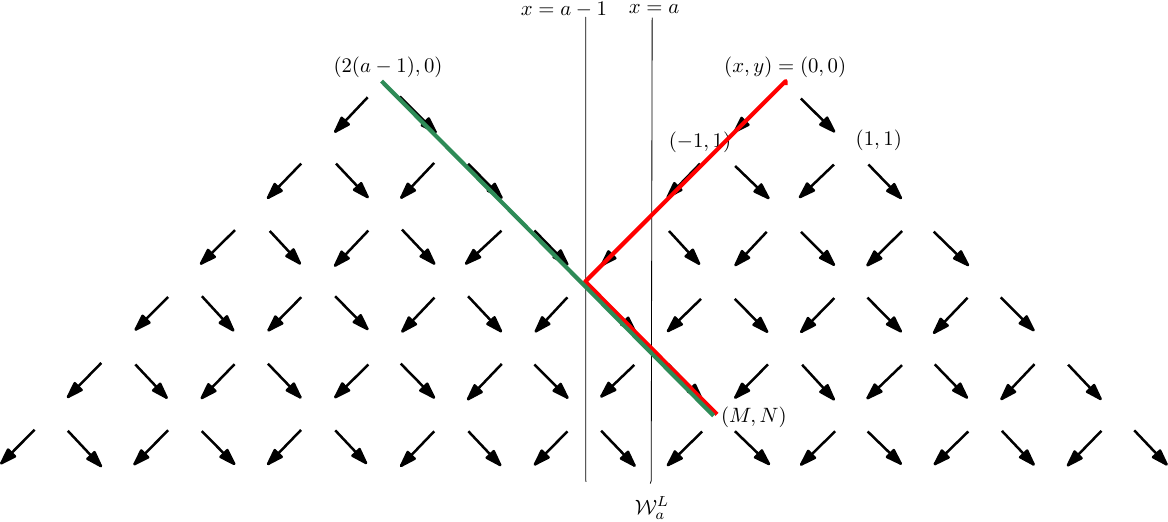}}
	\caption{Counting paths with the wall restriction. The initial path $\mathcal{P}$(red) and partially reflected path $\phi \mathcal{P}$(green).}
	\label{lwf}
\end{figure}

Unrestricted path $\mathcal{P}$ originating at $(0,0)$ will either be reflected from $\mathcal{W}^L_{a}$ or will cross the line $x=a$ to the left. If $\mathcal{P}$ crosses the wall, define by $\phi\mathcal{P}$ the path which coincide with $\mathcal{P}$ after the first wall crossing and its part before the first crossing is reflected with respect to the line $x=a-1$. The path $\phi\mathcal{P}$ originates at $(2(a-1),0)$.

The set $L((0,0)\to (M,N))$ of unrestricted paths is the disjoint union
\begin{equation}
\label{refl}
L((0,0)\to (M,N))=S_{+}\sqcup S_{-},
\end{equation}
where $S_{+}$ are the paths with wall restriction $\mathcal{W}^L_{a}$ and $S_{-}$ are the paths crossing the wall. By the observation above we can identify
$$\phi S_{-}=L((2(a-1),0)\to (M,N)),$$
and therefore 
$$|S_{-}|=|\phi S_{-}|=|L((2(a-1),0)\to (M,N))|={N\choose\frac{N-M}{2}+a-1}.$$
The formula ($\ref{refl}$) implies that
$$|S_{+}|=|L((0,0)\to (M,N))|-|S_{-}|.$$
and thus we have proved (\ref{lw}). The proof of (\ref{rw}) is completely parallel.
\end{proof}
\end{lemma}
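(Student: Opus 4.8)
The plan is to prove both formulas by André's reflection principle, turning the count of wall-restricted paths into a difference of two unrestricted counts, each supplied by Lemma \ref{l1}. I will carry out the left-wall case (\ref{lw}) in full; formula (\ref{rw}) then follows by the reflection $x \mapsto -x$, which swaps $\mathbb{S}_L$ with $\mathbb{S}_R$ and carries $\mathcal{W}^L_a$ to $\mathcal{W}^R_{-a}$, so no separate argument is needed.

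First I would reformulate the restriction geometrically. A path from $(0,0)$ respects $\mathcal{W}^L_a$ exactly when it never uses the forbidden step from a point $(a,y)$ to $(a-1,y+1)$; equivalently, it never reaches the line $x=a-1$, so it stays in the half-plane $x \geq a$ throughout (the start $(0,0)$ lies here since $a \leq 0$). Writing $L((0,0)\to(M,N))$ for the set of all unrestricted paths, I would split it as a disjoint union $S_+ \sqcup S_-$, where $S_+$ is the set of paths that stay in $x \geq a$ and $S_-$ is the set of paths that reach the line $x=a-1$ at least once. By the reformulation $S_+$ is precisely the restricted set, so $|S_+| = |L((0,0)\to(M,N)\;|\;\mathcal{W}^L_a)|$, and Lemma \ref{l1} gives $|L((0,0)\to(M,N))| = \binom{N}{\frac{N-M}{2}}$.

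The decisive step is a bijection $\phi \colon S_- \to L((2(a-1),0)\to(M,N))$. Given $\mathcal{P} \in S_-$, let $P_{i+1}$ be the first point of $\mathcal{P}$ lying on the line $x=a-1$, and let $\phi\mathcal{P}$ be the path obtained by reflecting the initial segment $P_0,\dots,P_{i+1}$ across the line $x=a-1$ while leaving the tail unchanged; since $P_{i+1}$ is fixed by this reflection the result is a genuine path on $\mathcal{L}$, it starts at the image $(2(a-1),0)$ of $(0,0)$, and it ends at $(M,N)$. I would exhibit the inverse directly: any path from $(2(a-1),0)$ to $(M,N)$ starts at $x=2a-2 \leq a-1$ and ends at $x=M \geq a$, so it must first reach $x=a-1$ at some point, and reflecting its initial segment up to that point across $x=a-1$ returns a path in $S_-$; the two operations are mutually inverse because each is pinned to the same first-hitting point of $x=a-1$. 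Using the translation bijection of Section \ref{unrestrict} together with Lemma \ref{l1}, I would then compute $|S_-| = |L((2(a-1),0)\to(M,N))| = \binom{N}{\frac{N-M}{2}+a-1}$, and subtraction yields (\ref{lw}).

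The main obstacle is pinning down the bijection $\phi$ rigorously: one must verify that the first arrival at the line $x=a-1$ is the correct synchronizing event for both directions, that reflecting across $x=a-1$ (and not $x=a$) is what preserves membership in the lattice $\mathcal{L}$ and fixes the hitting point, and that every target path from the reflected source is hit exactly once. The hypothesis $M \geq a$ is what makes this work, since it forces each path from $(2(a-1),0)$ to $(M,N)$ to actually reach $x=a-1$ and hence lie in the image of $\phi$; dropping it would place source and target on the same side of the wall and destroy the correspondence.
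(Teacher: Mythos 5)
Your proposal is correct and takes essentially the same approach as the paper: the identical decomposition $L((0,0)\to(M,N))=S_+\sqcup S_-$, the identical reflection across the line $x=a-1$ carrying the paths that reach the wall bijectively onto the unrestricted paths from $(2(a-1),0)$, and the same subtraction via Lemma \ref{l1}. Your only departures are minor refinements of the same argument---you verify the inverse of $\phi$ explicitly through the first-hitting point, which the paper leaves implicit, and you obtain (\ref{rw}) from (\ref{lw}) by the symmetry $x\mapsto -x$ where the paper simply declares the second case parallel.
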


\subsection{Filter restriction of type 1}
\begin{definition}
	We will say that there is a filter $\mathcal{F}_d^{1}$ of type 1, located at $x=d$ if at $x=d,\;d+1$ only the following steps are allowed:
	$$\mathcal{F}_d^{1}=\{(d,y)\to(d+1,y+1),\; (d+1,y+1)\to(d+2,y+2) ,\; (d+1,y+1)\xrightarrow[]{2}(d,y+2)\}.$$
The index above the arrow is the weight of the step. By default, an arrow with no number at the top means that the corresponding step has weight $1$.
\end{definition}
\begin{figure}[h!]
	\centerline{\includegraphics[width=150pt]{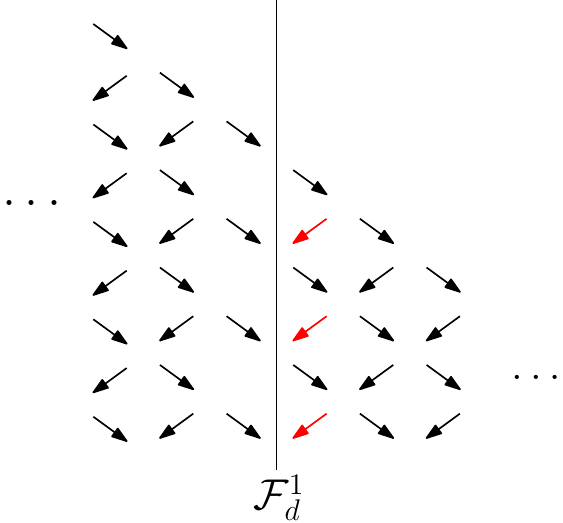}}
	\caption{Filter $\mathcal{F}_d^{1}$. Red arrows correspond to step $(d+1,y+1)\xrightarrow[]{2}(d,y+2)$ that has a weight $2$. Other steps have weight $1$.}
\end{figure}
\begin{lemma}
\label{l3}
The number of lattice paths from $(0,0)$ to $(M,N)$ with steps from $\mathbb{S}$ and filter restriction $\mathcal{F}_d^{1}$ with $x=d>0$ is
$$
|L_N((0,0)\to (M,N)\;|\;\mathcal{F}_d^{1})|={N\choose\frac{N-M}{2}}-{N\choose\frac{N-M}{2}+d},\;\;\text{ for}\;\; M<d .
$$ 
In other words, the number of paths that start at  point $(0,0)$ to the left of $\mathcal{F}_d^{1}$ and end at  point $(M,N)$ to the left of $\mathcal{F}_d^{1}$ is equal to the number of paths  from $(0,0)$ to $(M,N)$ with right wall restriction $\mathcal{W}_{d-1}^{R}$.
\begin{proof}
We proceed by the reflection principle similarly to the proof of Lemma \ref{l2}. The step that is forbidden by the filter is $(d,y)\xrightarrow[]{}(d-1,y+1)$. Hence, any path that crosses the axis $x=d$ can not return back to the region $x<d$. To count these paths we establish a bijection between them and paths starting at $(2d,0)$. We do so by reflecting the portion of each path until its first arrival at $x=d$ and leaving the other part unchanged. These paths need to be excluded from the total number of unrestricted paths, therefore 
\begin{eqnarray*}
|L_N((0,0)\to (M,N)\;|\mathcal{F}_d^{1})| &=& |L_N((0,0)\to (M,N)\;|-|L_N((2d,0)\to (M,N)\;|=\\
&=& {N\choose\frac{N-M}{2}}- {N\choose\frac{N-M}{2}+d}.
\end{eqnarray*}
\end{proof}
\end{lemma}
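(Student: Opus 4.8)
The plan is to reduce the weighted filter count to an unweighted count of paths confined to the half-plane $x\le d-1$, and then to invoke the reflection principle exactly as in Lemma \ref{l2}.

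First I would establish the key structural observation that the region $x\ge d$ is \emph{absorbing} for paths obeying $\mathcal{F}_d^{1}$. Indeed, the only step permitted at $x=d$ is $(d,y)\to(d+1,y+1)$, so a path reaching the line $x=d$ is forced to the right; and the only steps permitted at $x=d+1$ are $(d+1,y+1)\to(d+2,y+2)$ and the weight-$2$ leftward step $(d+1,y+1)\to(d,y+2)$, neither of which leads into $x<d$. Hence once a path touches $x=d$ it can never again enter the region $x<d$. Since the endpoint $(M,N)$ satisfies $M<d$, every path in $L_N((0,0)\to(M,N)\,|\,\mathcal{F}_d^{1})$ must stay entirely within $x\le d-1$ and never touch $x=d$.

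Second, I would draw the two consequences of this confinement. On one hand, a path that never reaches $x=d$ never uses the weight-$2$ step (which can only be taken at $x=d+1$), so every surviving path has weight $1$ and the weighted count coincides with the ordinary cardinality. On the other hand, a path that stays in $x\le d-1$ is forced, whenever it sits at $x=d-1$, to step left, since a right step would carry it to $x=d$; this is exactly the right wall restriction $\mathcal{W}^R_{d-1}$. Conversely, a $\mathcal{W}^R_{d-1}$-path never reaches $x=d$ and hence trivially obeys the filter. Thus the two sets of paths coincide, $L_N((0,0)\to(M,N)\,|\,\mathcal{F}_d^{1})=L((0,0)\to(M,N)\,|\,\mathcal{W}^R_{d-1})$, with all weights equal to $1$, which proves the ``in other words'' reformulation. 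Applying Lemma \ref{l2}, equation (\ref{rw}), with $b=d-1$ (note that $M<d$ is the same as $M\le d-1$ on $\mathcal{L}$) then yields $\binom{N}{\frac{N-M}{2}}-\binom{N}{\frac{N-M}{2}+d}$. Equivalently, one can reprove this count directly: the unrestricted paths from $(0,0)$ to $(M,N)$ that \emph{do} touch $x=d$ are, upon reflecting their initial segment up to the first visit of $x=d$ across the line $x=d$, in bijection with unrestricted paths from $(2d,0)$ to $(M,N)$; by translation invariance these number $\binom{N}{\frac{N-M}{2}+d}$, and subtracting them from the total $\binom{N}{\frac{N-M}{2}}$ gives the formula.

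The main obstacle is the first step: one must argue carefully that $x\ge d$ is genuinely absorbing, because a careless reading of $\mathcal{F}_d^{1}$ sees the leftward step $(d+1,y+1)\to(d,y+2)$ and might suggest that admissible paths can re-enter $x<d$. The point is that such a return lands exactly on $x=d$, from which the forced right step blocks any further descent; this is what collapses the weighted sum over all filter-admissible paths ending at $M<d$ to the purely combinatorial right-wall count, and in particular erases the weight-$2$ factors entirely.
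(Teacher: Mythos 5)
Your proposal is correct and follows essentially the same route as the paper: the key observation that the region $x\ge d$ is absorbing (so paths ending at $M<d$ never touch $x=d$, carry no weight-$2$ steps, and coincide with the $\mathcal{W}^R_{d-1}$-restricted paths), followed by the reflection of the initial segment across $x=d$ to put the excluded paths in bijection with unrestricted paths from $(2d,0)$. Your write-up is in fact somewhat more careful than the paper's terse proof, since it explicitly justifies the absorbing property and the disappearance of the weights.
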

\begin{lemma}
\label{l4}
The weighted number of lattice paths from $(0,0)$  to $(M,N)$  with steps from $\mathbb{S}$ and filter restriction $\mathcal{F}_d^{1}$ with $d>0$
is
$$
Z(L_N((0,0)\to (M,N)\;|\;\mathcal{F}_d^{1}))={N\choose\frac{N-M}{2}},\;\;\text{ for}\;\; M\geq d.
$$ 
In other words, the weighted number of paths that start at point $(0,0)$ to the left of $\mathcal{F}_d^{1}$ and end at point $(M,N)$ to the right of $\mathcal{F}_d^{1}$ is equal to the number of unrestricted paths from $(0,0)$ to $(M,N)$.
\begin{proof}
For brevity denote $X=L_N((0,0)\to (M,N)\;|\;\mathcal{F}_d^{1})$, where $M\geq d$. Define 
$$
\psi: X\to L_N((0,0)\to (M,N))
$$ 
that acts on a path $\mathcal{P}_{m}\in X$ which has $m$ steps $(d+1,y+1)\xrightarrow[]{2}(d,y+2)$ and a weight $2^{m}$ and produces $2^{m}$ paths in the set of unrestricted paths $L_N((0,0)\to (M,N);\mathbb{S})$ that have weight $1$:
$$
\psi(\mathcal{P}_m)=\{\mathcal{P}_1^1,\dots,\mathcal{P}_1^{2^m}\}.
$$
First, it reflects a portion of $\mathcal{P}_{m}$ between its two last visits to $x=d$ and produces two paths with weight $2^{m-1}$. Then we do the same to these two paths, where the next portions of such paths are reflected in a similar manner. We repeat this procedure until all of the produced paths are of weight $1$. Example of this procedure for $m=1$ is depicted in Figure \ref{l1fig}.
\begin{figure}[h!]
	\centerline{\includegraphics[height=160pt]{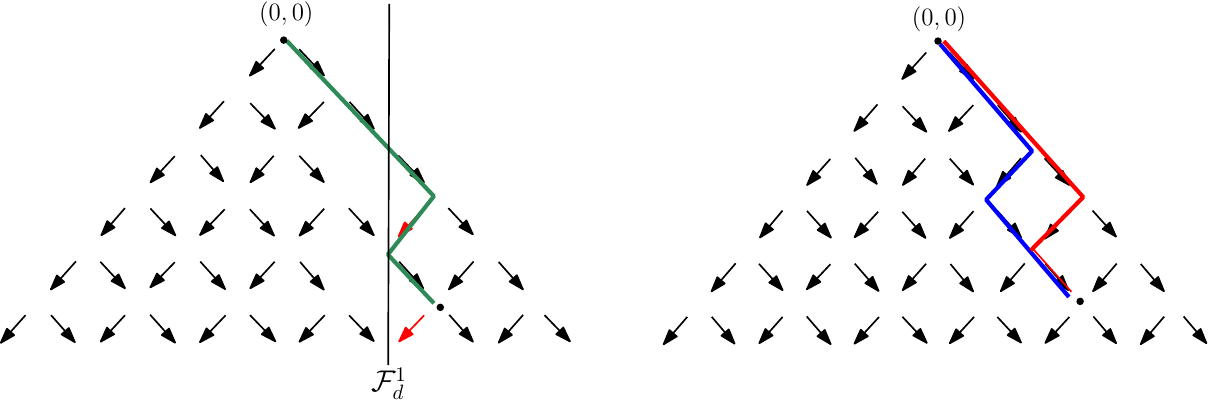}}
	\caption{Counting weighted paths that start on the left of $\mathcal{F}_d^{1}$ and end on the right of $\mathcal{F}_d^{1}$ via counting unrestricted paths.} 
	\label{l1fig}
\end{figure}\\
The paths $\tilde{\mathcal{P}}$ that do not contain the step $(d+1,y+1)\xrightarrow[]{2}(d,y+2)$  will not be reflected:
$$
\psi(\tilde{\mathcal{P}})=\tilde{\mathcal{P}}.
$$ 
From the result of this procedure we can see that the problem of finding the weighted number of paths $Z(X)$ is equal to counting the number of non-weighted paths in an unrestricted case (without the filter $\mathcal{F}_d^{1}$), therefore $Z(X)=|L_N((0,0)\to (M,N))|={N\choose\frac{N-M}{2}}$.
\end{proof}
\end{lemma}

\begin{lemma}
\label{l5}
The weighted number of lattice paths from $(0,0)$ to $(M,N)$ with steps from $\mathbb{S}$ and filter restriction $\mathcal{F}_{-d}^{1}$ with $d\geq0$ is
$$
Z(L_N((0,0)\to (M,N)\;|\;\mathcal{F}_{-d}^{1}))={N\choose\frac{N-M}{2}}+ {N\choose\frac{N-M}{2}+d},\;\;\text{ for}\;\; M\geq -d.
$$ 
In other words, the weighted number of  paths that start at point $(0,0)$ to the right of $\mathcal{F}_{-d}^{1}$ and end at  point $(M,N)$ to the right of $\mathcal{F}_{-d}^{1}$ is equal to the number of unrestricted paths from $(0,0)$ to $(M,N)$ and from $(-2d,0)$ to $(M,N)$.
\begin{proof}
Similar to Lemma \ref{l4}.
\end{proof}
\end{lemma}
\subsection{Filter restriction of type 2}
\begin{definition}
We will say that there is a filter $\mathcal{F}_d^{2}$ of type 2, located at $x=d$ if at $x=d-1,\;d,\;d+1$ only the following steps are allowed: 
\begin{eqnarray*}
	\mathcal{F}_{d}^{2}&=&\{(d-1,y-1)\xrightarrow[]{2}(d,y),\; (d-1,y-1)\to(d-2,y),\\
	& &(d,y)\to(d+1,y+1),\; (d+1,y+1)\to(d+2,y+2),\; (d+1,y+1)\xrightarrow[]{2}(d,y+2)\}.
\end{eqnarray*}
The index above the arrow is the weight of the step. By default, an arrow with no number at the top means that the corresponding step has weight $1$.
\end{definition}
\begin{figure}[h!]
	\centerline{\includegraphics[width=150pt]{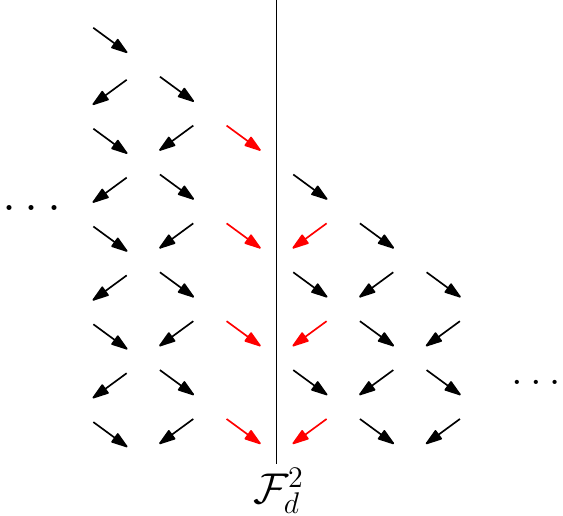}}
	\caption{Filter $\mathcal{F}_d^{2}$. Red arrows correspond to steps that have a weight $2$. Other steps have weight $1$.}
\end{figure}

\begin{lemma}
\label{l6}
The number of lattice paths from $(0,0)$  to $(M,N)$  with steps from $\mathbb{S}$ and filter restriction $\mathcal{F}_d^{2}$ 
is
$$
|L_N((0,0)\to (M,N)\;|\;\mathcal{F}_d^{2})|={N\choose\frac{N-M}{2}}- {N\choose\frac{N-M}{2}+d},\;\;\text{ for}\;\; M<d .
$$ 
In other words the number of paths that start at  point $(0,0)$ to the left of $\mathcal{F}_d^{2}$ and end at  point $(M,N)$ to the left of $\mathcal{F}_d^{2}$ is equal to the number of paths  from $(0,0)$ to $(M,N)$ with right wall restriction $\mathcal{W}_{d-1}^{R}$.
\begin{proof}
Similar to Lemma \ref{l3}.
\end{proof}
\end{lemma}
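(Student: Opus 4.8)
The plan is to reduce the statement to the reflection principle of Lemma \ref{l2}, observing that for endpoints lying to the left of the filter the type 2 filter is indistinguishable from the type 1 filter of Lemma \ref{l3}. The starting point is to read off from the definition of $\mathcal{F}_d^2$ which steps are genuinely \emph{forbidden}: the only prohibited move is the leftward step $(d,y)\to(d-1,y+1)$ at $x=d$. All remaining differences between types 1 and 2 are merely differences in the weights attached to \emph{allowed} steps, and every one of those weighted steps occurs at $x=d$ or $x=d+1$.

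First I would establish a confinement claim: any admissible path from $(0,0)$ to $(M,N)$ with $M<d$ must stay strictly in the region $x<d$. Indeed, suppose such a path reached a point with $x=d$. The only step permitted there is the rightward one to $x=d+1$, and from $x=d+1$ the sole move back toward the left returns to $x=d$, from which one is again forced rightward. Hence once a path touches the line $x=d$ it is trapped in $x\geq d$ and can never return to $x<d$, contradicting $M<d$. Therefore the admissible paths are precisely the unrestricted paths that never touch $x=d$, which are exactly the paths obeying the right wall restriction $\mathcal{W}_{d-1}^R$. Since these paths never visit $x=d$ or $x=d+1$, they traverse no weighted step, so their weighted count equals their plain count.

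Next I would invoke the reflection principle exactly as in Lemma \ref{l2}: embed into the unrestricted set $L((0,0)\to(M,N))$, reflect across the line $x=d$ the initial segment of every path that touches it, and identify the reflected family with $L((2d,0)\to(M,N))$, whose cardinality is $\binom{N}{\frac{N-M}{2}+d}$ by translation invariance. Subtracting yields $\binom{N}{\frac{N-M}{2}}-\binom{N}{\frac{N-M}{2}+d}$, which is precisely the right wall count $\mathcal{W}_{d-1}^R$ obtained from (\ref{rw}) with $b=d-1$.

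The computation is otherwise routine, so the only point demanding care—and the step I would treat as the main obstacle—is the confinement claim: one must verify that the single forbidden leftward step at $x=d$ genuinely traps a path on the right the instant it arrives at $x=d$, so that endpoints with $M<d$ compel the path to avoid the filter entirely. Once this is in place the weights become irrelevant and the argument coincides verbatim with the type 1 derivation of Lemma \ref{l3}.
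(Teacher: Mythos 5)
Your proposal is correct and follows the same route as the paper, whose proof is simply ``similar to Lemma \ref{l3}'': the forbidden step $(d,y)\to(d-1,y+1)$ traps any path touching $x=d$ in the region $x\geq d$, so the admissible paths are exactly those avoiding $x=d$, counted by reflecting initial segments to $(2d,0)$. Your explicit confinement argument and the observation that such paths meet no weighted step (so the weights of the type~2 filter are irrelevant here) are exactly the details the paper leaves implicit.
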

\begin{lemma}
\label{l7}
The weighted number of lattice paths from $(0,0)$  to $(M,N)$  with steps from $\mathbb{S}$ and filter restriction $\mathcal{F}_d^{2}$ 
is
$$
Z(L_N((0,0)\to (M,N)\;|\;\mathcal{F}_d^{2}))=2 {N\choose\frac{N-M}{2}},\;\;\text{ for}\;\; M>d.
$$ 
In other words, the weighted number of paths that start at point $(0,0)$ to the left of $\mathcal{F}_d^{2}$ and end at point $(M,N)$ to the right of $\mathcal{F}_d^{2}$ is equal to the double number of unrestricted paths from $(0,0)$ to $(M,N)$.
\begin{proof}
 Similar to Lemma \ref{l4}.
\end{proof}
\end{lemma}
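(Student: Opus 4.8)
The plan is to mirror the reflection/unfolding argument of Lemma \ref{l4}, the only new ingredient being the weight $2$ now carried by every forward step $(d,y)\to(d+1,y+1)$ across the filter. I would first record the structure of an admissible path. Since at column $x=d$ the only permitted step is the forward one, a path can never return to the region $x<d$ once it has reached $x=d$; hence every path in the set under consideration reaches column $d$ at a well-defined first moment, crosses to $x=d+1$, then oscillates between the columns $d$ and $d+1$, and finally escapes to $x\geq d+2$ on its way to the endpoint $(M,N)$ with $M>d$. Each sojourn at column $d$ is left by a forward step of weight $2$, and each return to $d$ after the first arrival (which comes from the left and is unweighted) is a backward step $(d+1,y+1)\xrightarrow[]{2}(d,y+2)$ of weight $2$.

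Next I would adapt the map $\psi$ of Lemma \ref{l4}. Exactly as there, I unfold the weight-$2$ backward steps: the portion of the path between two consecutive visits to $x=d$ is either reflected about the line $x=d$ or kept in place, each backward step of weight $2$ being traded for this binary reflect/keep choice. As in Lemma \ref{l4}, this turns the oscillations into genuine unrestricted excursions and reproduces the count $\binom{N}{(N-M)/2}$ of unrestricted paths from $(0,0)$ to $(M,N)$. The genuinely new feature is the weight $2$ sitting on the forward crossing steps; the target identity
\begin{equation*}
Z(L_N((0,0)\to (M,N)\;|\;\mathcal{F}_d^{2}))=2\binom{N}{\tfrac{N-M}{2}}
\end{equation*}
asserts that, once the unfolding has reorganised the repeated crossings into reflected excursions, all these forward weights collapse to a single overall factor $2$ attached to the net left-to-right crossing that every admissible path must perform. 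The cleanest way to try to isolate this factor is to split a path at its \emph{first passage} to $x=d$: the initial segment from $(0,0)$ to the first lattice point of column $d$ is unweighted, while the remaining segment, to which the type-$1$ analysis of Lemma \ref{l4} should apply verbatim, carries exactly one unpaired forward step more than it has backward steps.

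The main obstacle is precisely this bookkeeping of the forward weights through the reflection. The unfolding of Lemma \ref{l4} is engineered around the backward steps, and the delicate point is to verify that the weight-$2$ forward steps contribute \emph{exactly one} factor of $2$ rather than one factor per crossing; a naive count assigns a factor $2$ to every sojourn at column $d$, and one must show that the excursions between successive sojourns are absorbed, weight and all, into the unrestricted paths produced by $\psi$, leaving only the single net crossing uncancelled. Pinning this pairing down, and checking it on the boundary cases where $M$ is maximal for given $N$ (so that no oscillation is possible and the factor $2$ is manifestly the weight of the unique forward crossing), is where the argument requires care; the remaining steps are identical to those of Lemma \ref{l4}.
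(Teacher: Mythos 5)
You have correctly reduced the problem to its crux, but the proposal stops exactly where a proof is needed: you acknowledge that ``a naive count assigns a factor $2$ to every sojourn at column $d$'' and that one must show these collapse to a single global factor of $2$, and you then defer that verification. Unfortunately the deferred step is not merely delicate --- it fails. Run the unfolding $\psi$ of Lemma \ref{l4} on an admissible path with $k$ returns to column $d$: such a path carries $k$ backward steps $(d+1,\cdot)\xrightarrow[]{2}(d,\cdot)$ and $k+1$ forward steps $(d,\cdot)\xrightarrow[]{2}(d+1,\cdot)$, hence total weight $2^{2k+1}$, while $\psi$ produces only $2^{k}$ unrestricted paths. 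The forward weights therefore contribute a factor $2^{k+1}$ depending on $k$, and summing over restricted paths yields $2\sum_{Q}2^{k(Q)}$, where $k(Q)$ is the number of returns to $x=d$ of the folded image of the unrestricted path $Q$; this strictly exceeds $2\binom{N}{(N-M)/2}$ as soon as some admissible path revisits column $d$.

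Concretely, take $d=1$ and $(M,N)=(2,4)$. The admissible paths are $RRRL$, $RRLR$, $LRRR$ with weights $2$, $8$, $2$ respectively (the fourth unrestricted path $RLRR$ is excluded because $(1,1)\to(0,2)$ is forbidden), so $Z=12$, whereas $2\binom{4}{1}=8$. The single bouncing path $RRLR$ unfolds under $\psi$ to the two unrestricted paths $RRLR$ and $RLRR$, yet carries weight $8=2^{2}\cdot 2$ rather than the $2\cdot 2$ your pairing would require. So the cancellation you hope to ``pin down'' does not exist: with the paper's product-over-steps weights, the displayed identity is exact only when no path can return to column $d$ (e.g.\ $N=M$), and otherwise acquires correction terms of precisely the kind that Theorems \ref{TH3} and \ref{TH5} later track with reflection series. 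The paper's own justification is only the phrase ``similar to Lemma \ref{l4}'', so it offers no resolution either; but as written your argument cannot be completed into a proof of the stated formula, and the boundary check you propose (maximal $M$ for given $N$, where no oscillation occurs) is exactly the only regime in which the formula holds.
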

\begin{lemma}
\label{l8}
The weighted number of lattice paths from $(0,0)$  to $(M,N)$  with steps from $\mathbb{S}$ and filter restriction $\mathcal{F}_{-d}^{2}$ 
is
$$
Z(L_N((0,0)\to (M,N)\;|\;\mathcal{F}_{-d}^{2}))={N\choose\frac{N-M}{2}}+ {N\choose\frac{N-M}{2}+d},\;\;\text{ for}\;\; M\geq -d
$$ 
In other words, the weighted number of  paths that start at point $(0,0)$ to the right of $\mathcal{F}_{-d}^{2}$ and end at  point $(M,N)$ to the right of $\mathcal{F}_{-d}^{2}$ is equal to the number of unrestricted paths from $(0,0)$ to $(M,N)$ and from $(-2d,0)$ to $(M,N)$.
\begin{proof}
Similar to Lemma \ref{l5}.
\end{proof}
\end{lemma}

\section{Counting paths  with wall and one filter restriction}
\label{wallfilter}
In this section we will give a formula for the number of weighted paths with the left wall restriction $\mathcal{W}_0^L$ located at $x=0$ and the type $1$ filter restriction $\mathcal{F}_{d}^1$ located at $d=l-1$. Theorem \ref{TH4} gives the number of paths that  end to the left of the filter and Theorem \ref{TH2} gives the number of paths that end to the right of the filter. The resulting formula can be easily generalized to the arbitrary location of the wall and the filter.

Let us  denote by  $$L_N((0,0)\to (M,N)\;|\;\mathcal{W}_0^L,\mathcal{F}_{l-1}^{1})$$ 
the set of paths on $\mathcal{L}$ that start at $(0,0)$ and end at $(M,N)$  in the presence of the wall $\mathcal{W}_0^L$ and the filter $\mathcal{F}_{l-1}^1$. 
We will denote by $F_M^{(N)}$
the number of paths from $(0,0)$ to $(M,N)$ with the wall restriction $\mathcal{W}_0^L$ located at $x=0$. Due to  Lemma \ref{l2}:
\begin{equation*}
F_M^{(N)}= {N\choose\frac{N-M}{2}}-{N\choose{\frac{N-M}{2}-1}}.
\end{equation*}

\begin{theorem}\label{TH4}
The number of lattice paths from $(0,0)$ to $(M,N)$  with steps from $\mathbb{S}$ and with the wall restriction $\mathcal{W}_0^L$ and the filter restriction $\mathcal{F}_{l-1}^1$ when $0\leq M\leq l-2$ is given by
\begin{equation}\label{desire1}
|L_N((0,0)\to (M,N)\;|\;\mathcal{W}_0^L,\mathcal{F}_{l-1}^{1})|=F^{(N)}_M+\sum_{k=1}^{\bigl[\frac{N}{2l}+\frac{1}{2}\bigl]} F^{(N)}_{M-2kl}+\sum_{k=1}^{\bigl[\frac{N}{2l}\bigl]} F^{(N)}_{M+2kl}.
\end{equation}
\end{theorem}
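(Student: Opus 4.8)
The plan is to show that when the endpoint lies to the left of the filter the weighted steps can never occur, so that $\mathcal{F}^1_{l-1}$ degenerates into an ordinary reflecting wall, and then to enumerate the resulting two-wall problem by the method of images. First I would establish the \emph{trapping} property of $\mathcal F^1_{l-1}$: by the definition of the filter the only step available at $x=l-1$ is $(l-1,y)\to(l,y+1)$, while at $x=l$ the available steps are $(l,y+1)\to(l+1,y+2)$ and $(l,y+1)\xrightarrow[]{2}(l-1,y+2)$. Consequently, once a path reaches the line $x=l-1$ every subsequent point has $x\geq l-1$: from $x=l-1$ it is forced to the right, and from $x=l$ it may return to $x=l-1$ but never to $x=l-2$. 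Hence a path ending at $(M,N)$ with $0\le M\le l-2$ can never have visited $x=l-1$; it stays in the strip $0\le x\le l-2$ and in particular never uses a step originating at $x=l$. Since the only weighted steps are exactly those $(l,y+1)\xrightarrow[]{2}(l-1,y+2)$, every such path has weight $1$, so the weighted count coincides with the cardinality $|L_N(\dots)|$.

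Next I would recast the confinement as a two-wall condition. Remaining in $0\le x\le l-2$ is equivalent to imposing the left wall $\mathcal W^L_0$ (paths never reach $x=-1$) together with the condition that paths never reach $x=l-1$, and the latter is exactly the right wall $\mathcal W^R_{l-2}$. The two associated reflection lines are $x=-1$ and $x=l-1$, at distance $l$. Thus the problem becomes counting paths from $(0,0)$ to $(M,N)$ confined strictly between these two lines, which I would solve by the reflection principle in its iterated form. The group generated by the reflections $r_1:x\mapsto -2-x$ and $r_2:x\mapsto 2l-2-x$ is the infinite dihedral group, whose translation subgroup is generated by $r_2r_1:x\mapsto x+2l$. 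Following the same cancellation argument as in Lemma \ref{l2}, the number of confined paths is the signed sum over the orbit of the source $(0,0)$,
\[
\sum_{k\in\mathbb{Z}}|L((2kl,0)\to(M,N))|-\sum_{k\in\mathbb{Z}}|L((2kl-2,0)\to(M,N))|,
\]
the even images $2kl$ carrying sign $+$ and the odd images $2kl-2$ carrying sign $-$.

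Finally I would convert this into the stated form. Using $|L((i,0)\to(M,N))|=\binom{N}{(N-M+i)/2}$ from Lemma \ref{l1} together with translation invariance, each matched pair of terms collapses to
\[
\binom{N}{\tfrac{N-M}{2}+kl}-\binom{N}{\tfrac{N-M}{2}-1+kl}=F^{(N)}_{M-2kl},
\]
so the count equals $\sum_{k\in\mathbb{Z}}F^{(N)}_{M-2kl}$. Splitting off $k=0$ and grouping the indices $k\ge 1$ with $k\le -1$ yields $F^{(N)}_M+\sum_{k\ge 1}F^{(N)}_{M-2kl}+\sum_{k\ge 1}F^{(N)}_{M+2kl}$, which is (\ref{desire1}); the upper summation limits are simply the ranges in which the binomial coefficients do not vanish, and evaluating these with $0\le M\le l-2$ produces the floors $\bigl[\tfrac{N}{2l}+\tfrac12\bigr]$ and $\bigl[\tfrac{N}{2l}\bigr]$.

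The step I expect to be the main obstacle is the rigorous justification of the iterated reflection for two walls: one must verify that the alternating sum over the full dihedral orbit cancels precisely the paths touching either reflecting line, with no miscount of paths that oscillate back and forth between the two walls, and that the resulting doubly-infinite sum is genuinely finite. The trapping observation, although it is the conceptual key, is short; it is the bookkeeping of signs across the dihedral orbit and the exact determination of the summation bounds where the care is needed.
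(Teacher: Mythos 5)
Your proposal is correct and follows essentially the same route as the paper: you reduce the filter to a right wall at $x=l-2$ via the trapping observation (this is the paper's Lemma \ref{l3}), then apply the iterated reflection principle for the two reflecting lines $x=-1$ and $x=l-1$, summing with signs over the orbit of $(0,0)$ under the infinite dihedral group and collapsing each pair $\{2kl,\,2kl-2\}$ to $F^{(N)}_{M-2kl}$, exactly as in the paper's proof. The step you flag as the main obstacle --- justifying the signed cancellation over the dihedral orbit --- is precisely the Gessel--Zeilberger involution (reflect the portion of a ``bad'' path up to its last visit of an axis), which the paper simply reproduces from \cite{GZ} and \cite{Kratt}, so no new idea is missing.
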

\begin{proof}
The paths that we are going to count start to the left and end to the left of the filter $\mathcal{F}_{l-1}^{1}$. In this case filter  restriction at $x=l-1$ is equivalent to the wall restriction at  $x=l-2$ according to Lemma \ref{l3}, so $|L_N((0,0)\to (M,N)\;|\;\mathcal{W}_0^L,\mathcal{F}_{l-1}^{1})|=|L_N((0,0)\to (M,N)\;|\;\mathcal{W}_0^L,\mathcal{W}_{l-2}^R| $. Therefore  we can use the  Gessel-Zeilberger reflection principle \cite{GZ}. The proof of this result is presented in \cite{Kratt}. For readers convenience  we will repeat the proof of this fundamental result using slightly different notations below.
 
Let us denote by $W$ the group $W=\{w_1,w_2
\;|\;w_1^2=w_2^2=id\}$, generated by $w_1$ (the reflection w.r.t. $x=-1$) and $w_2$ (the reflection w.r.t. $x=l-1$)and  by  $\mathcal{H}=\{x=-1,x=l-1\}$ the set of reflection axes\footnote{$W$ is the Weyl group of $\hat{\mathfrak{sl}}_2$, the affine Kac-Moody algebra corresponding to $\mathfrak{sl}_2$}. The assignment $\mathrm{sgn}(w_1)=\mathrm{sgn}(w_2)=-1$ defines a signature character on $W$, $\mathrm{sgn}(w)=\mathrm{sgn}(w_1)^{n_1}\mathrm{sgn}(w_2)^{n_2}$, where $n_i$ is the number of occurrences of $w_i$ in $W$. It is easy to check that $\mathrm{sgn}(w)$ does not depend on the decomposition of $w$ in the product of generators and that $\mathrm{sgn}(w)(w^\prime)=\mathrm{sgn}(w)\mathrm{sgn}(w^\prime)$. 
 
We will denote the set of paths from point $A$ to point $B$ that stay between axes $\mathcal{H}$ as the set of "good" paths:  $$L_N^g(A\to B).$$

We will denote  the paths from point $A$ to point $B$ that visit any of the axes $\mathcal{H}$ as the set of "bad" paths: 
$$L_N^b(A\to B).$$ 
It is clear that to count the good paths from $(0,0)$ to $(M,N)$ we can remove bad paths from the set of unrestricted paths:
\begin{equation}
\label{xc}
    |L_N^g((0,0)\to (M,N))|=|L_N((0,0)\to (M,N))|- |L_N^b((0,0)\to (M,N))|.
\end{equation}
To obtain expression for $|L_N^b((0,0)\to (M,N))|$ we will firstly show that
\begin{equation}
\label{zero}
   \sum_{w\in W}(\mathrm{sgn}(w))|L_N^b(w(0,0)\to (M,N))|=0.
\end{equation}
 Consider a typical "bad" walk $\mathcal{P}$ from  $w(0,0)$ to $(M,N)$ that visits the axis $h\in\mathcal{H}$. We can pair this walk to  the walk $\tilde{\mathcal{P}}$ from  $w_hw(0,0)$ to $(M,N)$ obtained by reflecting the portion of $\mathcal{P}$ until its last visit of $h$ (see Figure \ref{act} for an example, $w=w_2,\;w_h=w_1$). It is clear that the pairing of walks is an involution and $\mathrm{sgn} w_i=-\mathrm{sgn} (w_hw_i)$.  All the terms in (\ref{zero}) can be arranged in such pairs. They cancel each other, and therefore the sum is zero.

If $w\neq id$, every walk starting from $w(0,0)$ and ending at $(M,N)$ is a "bad" walk, so for $w\neq id$ the set of unrestricted paths from $w(0,0)$ to $(M,N)$ is the set of bad paths from $w(0,0)$ to $(M,N)$:
\begin{equation}
\label{wbad}
    L_N(w(0,0)\to (M,N))=L_N^b(w(0,0)\to (M,N)),\;\;w\neq \;id
\end{equation}
 Now substituting (\ref{zero}), (\ref{wbad}) to (\ref{xc}) and denoting $L_N(w)=L_N(w(0,0)\to (M,N))$ we obtain
\begin{equation}
\label{enum}
|L_N^g(w(0,0)\to (M,N))|=\sum_{w\in W}(\mathrm{sgn}(w))|L_N(w)|.
\end{equation}
Note that
$
|L_N(w)|={N\choose\frac{N-M+x(w)}{2}},
$
where $x(w)$ is the $x$ coordinate of $w(0,0)$, as in Figure \ref{act}. 
\begin{figure}[h!]
	\centerline{\includegraphics[width=450pt]{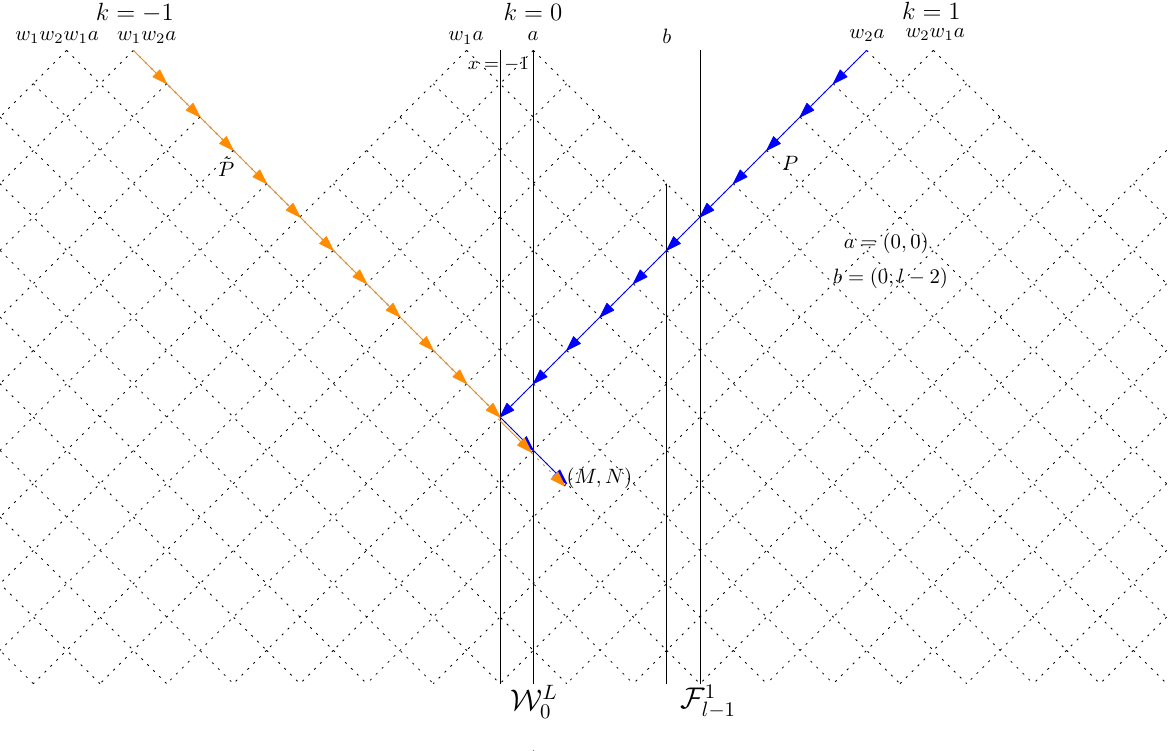}}
	\caption{
	Counting weighted paths that start at $(0,0)$ and end at $(M,N)$ for $M< l-1$ with the wall restriction $\mathcal{W}_0^L$ and the filter restriction $\mathcal{F}_{l-1}^1$. The reflection group $W$ acts on paths, touching axes of $\mathcal{H}$. Path $P$ and path $\tilde{P}$ after involution.}
	\label{act}
\end{figure}

To complete the proof we have to find the range of possible $w$ that contribute to the sum (\ref{enum}). For convenience, we will group elements $w\in W$ by pairs $\{w^L(k),w^R(k)\}$ with $k\in\mathbb{Z}$ and $x(w^R(k))-x(w^L(k))=2$. Note, that $\mathrm{sgn}w^R(k)=1$ and $\mathrm{sgn}w^L(k)=-1$. Precisely, we will have
\begin{equation}
\text{for $x>0$; $k=1,2,\ldots$: $w^R(k)=(w_2w_1)^k$ and $w^L(k)=w_2(w_1w_2)^{k-1}$},  \nonumber
\end{equation}
\begin{equation}
\text{for $x\leq0$; $k=0,-1,\ldots$: $w^R(k)=(w_1w_2)^{-k}$ and $w^L(k)=w_1(w_2w_1)^{-k}$}  .\nonumber
\end{equation}
 To find the possible range of $k$ for any given $(M,N)$ one can note that $x$ coordinate of $w^R(k)(0,0)$ is $2kl$. If we place endpoint $(M,N)$ on the right border $x=l-2$, the last path that could reach it must start at point $(N+l-2,0)$, which we set to be $w_L(k_{max})(0,0)$. This point belongs to the pair $\{w_L(k_{max}),w_R(k_{max})\}$. Consequently $w_R(k_{max})=N+l$ and, therefore, 
\begin{equation}
k_{max}=\Big[\frac{N+l}{2l}\Big]=\Big[\frac{N}{2l}+\frac{1}{2}\Big]. \nonumber
\end{equation} 
Similarly, 
\begin{equation}
k_{min}=-\Big[\frac{N}{2l}\Big]. \nonumber
\end{equation}
Combining all of the above, for the number of paths  in $L_N^g(w(0,0)\to (M,N))$ we obtain from (\ref{act}):
\begin{eqnarray}
\label{x}
|L_N^g(w(0,0)\to (M,N))|
=|L_N(id)|-|L_N(w_1)|+\sum_{k=1}^{k_{max}}\big(|L_N((w_2w_1)^k)|-|L_N(w_2(w_1w_2)^{k-1})|\big)+ \nonumber \\
+\sum_{j=1}^{j_{max}}\big(|L_N((w_1w_2)^j)|-|L_N(w_1(w_2w_1)^{j})|\big)={N\choose\frac{N-M}{2}}-{N\choose\frac{N-M}{2}-1}+ \nonumber \\
+\sum_{k=1}^{k_{max}}\Big({N\choose\frac{N-M}{2}+kl}-{N\choose\frac{N-M}{2}+kl-1}\Big) +\sum_{j=1}^{-k_{min}}\Big({N\choose\frac{N-M}{2}-kl}-{N\choose\frac{N-M}{2}-kl-1}\Big), \nonumber
\end{eqnarray}
that is
\begin{equation}
\label{f1r}
|L_N((0,0)\to (M,N);\mathbb{S}\;|\;\mathcal{W}_0^L,\mathcal{F}_{l-1}^{1},0\leq M\leq l-2)|=F^{(N)}_M+\sum_{k=1}^{\bigl[\frac{N}{2l}+\frac{1}{2}\bigl]} F^{(N)}_{M-2kl}+\sum_{k=1}^{\bigl[\frac{N}{2l}\bigl]} F^{(N)}_{M+2kl}.
\end{equation}
\end{proof}
\begin{theorem}\label{TH2}
The weighted number of lattice paths from $(0,0)$ to $(M,N)$  with steps from $\mathbb{S}$ and with the wall restriction $\mathcal{W}_0^L$ and the filter restriction $\mathcal{F}_{l-1}^1$ when $M> l-2$ is given by
\begin{equation}\label{desire2}
Z(L_N((0,0)\to (M,N)\;|\;\mathcal{W}_0^L,\mathcal{F}_{l-1}^{1}))=F^{(N)}_M+\sum_{k=1}^{\bigl[\frac{N-l+1}{2l}\bigl]} F^{(N)}_{M+2kl}.
\end{equation}
\end{theorem}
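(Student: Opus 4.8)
The plan is to re-run the reflection-principle argument of Theorem \ref{TH4}, with the same affine reflection group $W=\langle w_1,w_2\rangle$ (where $w_1$ is the reflection in $x=-1$ coming from the wall $\mathcal{W}_0^L$ and $w_2$ the reflection in $x=l-1$ coming from the filter $\mathcal{F}_{l-1}^1$), but now for endpoints lying to the right of the filter, $M\geq l-1$. The essential new feature is that the filter is no longer an impassable wall for these paths: a path from $(0,0)$ to $(M,N)$ must cross $x=l-1$, and the weight-$2$ back-steps at the filter are precisely what make this crossing ``transparent'' in the weighted count, as already isolated in Lemmas \ref{l4} and \ref{l5}. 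Hence the only genuinely forbidden moves are those at the wall $x=0$, so the reflection principle should reflect only across the wall axis $x=-1$, never first across the (now passable) filter axis.

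Concretely, I would first show, exactly as in the proof of Theorem \ref{TH4}, that the weighted count is a signed sum over images of the origin,
\begin{equation*}
Z(L_N((0,0)\to (M,N)\,|\,\mathcal{W}_0^L,\mathcal{F}_{l-1}^{1}))=\sum_{w}\mathrm{sgn}(w)\binom{N}{\tfrac{N-M+x(w)}{2}},
\end{equation*}
where $x(w)$ is the $x$-coordinate of $w(0,0)$ and each summand $\binom{N}{(N-M+x(w))/2}=|L_N(w(0,0)\to (M,N))|$ is an unrestricted count. The weight-$2$ steps enter through Lemmas \ref{l4} and \ref{l5}: when the reflected portion of a path is carried across the filter, the bounce weight $2$ converts what would be a subtracted filter-reflected term into the next translation image, so that the filter contributes additively rather than by an alternating wall-type sign. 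The key point to establish is that the sum then runs only over the \emph{left half} of the $W$-orbit of the origin, $x(w)\in\{0,-2,-2l,-2l-2,\dots\}$, i.e.\ over the identity together with those $w$ whose reduced word begins with $w_1$; the right-hand images that appear in Theorem \ref{TH4} drop out because reflecting first across the passable filter no longer pairs a legitimate ``bad'' path.

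Granting this, the sum telescopes into the stated shape. Pairing the two images $x=-2kl$ (with $\mathrm{sgn}=+1$) and $x=-2kl-2$ (with $\mathrm{sgn}=-1$) gives
\begin{equation*}
\binom{N}{\tfrac{N-M}{2}-kl}-\binom{N}{\tfrac{N-M}{2}-kl-1}=F^{(N)}_{M+2kl},
\end{equation*}
so that $Z=\sum_{k\geq 0}F^{(N)}_{M+2kl}$. The summation range is fixed just as $k_{\max}$ was in Theorem \ref{TH4}: placing the endpoint at the extreme left position of the right region, $M=l-1$, the furthest left image $w(0,0)$ that can still reach $(M,N)$ in $N$ steps is the one with $(N-M)/2-kl\geq 0$, giving the uniform bound $k_{\max}=\big[\tfrac{N-l+1}{2l}\big]$; for $M>l-1$ the extra terms vanish because the binomials fall out of range, so the same bound may be used. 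This yields (\ref{desire2}).

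The main obstacle is the middle step: making rigorous that only the left half of the orbit survives and that the weight-$2$ steps produce exactly the $+1$ coefficients on the translation images. The cleanest way I see to do this is to decompose each path at its first arrival at $x=l-1$ (after which, by the filter rules, it can never return to $x\leq l-2$): the initial segment is a path confined to the strip $0\leq x\leq l-2$, enumerated by the two-wall reflection sum of Theorem \ref{TH4}, while the terminal segment starts to the right of the filter and is governed by Lemma \ref{l5}, whose single mirror image is what introduces the shift by the period $2l$. Carrying out the convolution over the crossing height and checking that the two reflection sums combine into the one-sided orbit sum above is the technical heart of the argument; the remaining sign bookkeeping and the determination of the summation limit are already rehearsed in Theorem \ref{TH4}.
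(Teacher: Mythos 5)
Your target formula, the one-sided orbit sum over the starting points $x(w)\in\{0,-2,-2l,-2l-2,\dots\}$, the pairing of consecutive images into $F^{(N)}_{M+2kl}$, and the bound $k_{\max}=\bigl[\tfrac{N-l+1}{2l}\bigr]$ are all correct, and your guiding intuition (the weight-$2$ bounces make the filter transparent to weighted counting, so only the wall produces genuinely forbidden configurations) is exactly the mechanism the paper exploits. But the claim you yourself single out as ``the key point to establish'' --- that the signed sum survives only on the left half of the $W$-orbit, with signs arranged so that everything pairs into positive $F$-terms --- \emph{is} the content of the theorem, and your proposal asserts it, reads off (\ref{desire2}), and then defers its justification to an unexecuted ``technical heart''. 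That is a genuine gap. For comparison, the paper closes it by applying the map $\psi$ of Lemma \ref{l4} to trade each weighted path for $2^m$ unweighted ones, observing that for $M>l-2$ the only remaining ``bad'' unweighted paths are those touching $x=-1$, and then running an inclusion--exclusion in which a new correction term is forced at each threshold $N=3l-1,4l-1,\dots$, so that reflections occur only across the axes $x=-1,-l-1,-2l-1,\dots$ and the starting points $0,-2,-2l,-2l-2,\dots$ appear with exactly the claimed signs.

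Your proposed route to rigor --- cut each path at its first crossing of the filter and convolve the strip-confined count of Theorem \ref{TH4} with a tail count --- is genuinely different from the paper's and could in principle work, but as sketched it has a misstep. The weighted number of terminal segments (starting on, or one step past, $x=l-1$ and never returning to $x\leq l-2$) is a single unrestricted binomial $\binom{n}{(n-m)/2}$, or at best a two-term sum whose mirror sits at distance $2$; it is \emph{not} given by Lemma \ref{l5}, and its mirror image is not what ``introduces the shift by the period $2l$'' --- all the $2l$-periodic translates come from the two-wall reflection sum governing the initial, strip-confined segment. Consequently the Vandermonde-type identity needed to recombine the convolution $\sum_{y_0}A(y_0)B(N-y_0)$ into $\sum_{k\geq 0}F^{(N)}_{M+2kl}$ is both nontrivial and set up with the wrong factors in your sketch; supplying and verifying it is precisely the missing proof.
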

\begin{proof}
To count the weight of "good" paths (paths from $(0,0)$ to $(M,N)$ that obey the restrictions of the theorem) we can subtract the weight  of "bad" paths from the weight of unrestricted paths:
\begin{equation}
\label{}
    Z(L_N^g((0,0)\to (M,N)))=Z(L_N((0,0)\to (M,N)))- Z(L_N^b((0,0)\to (M,N))).
\end{equation}
Here $L_N^g((0,0)\to (M,N))$ are "good" paths and $L_N^b((0,0)\to (M,N))$ are "bad" paths. According to the assumptions of the theorem the "bad" paths are the ones that visit the axis $x=-1$ and the ones that have a step $(d,y)\to(d-1,y+1)$(this step is forbidden by the filter $\mathcal{F}_{l-1}^{1}$). But when we transition from the weighted paths to non-weighted paths by a map $\psi$ according to Lemma \ref{l4}, the step  $(d,y)\to (d-1,y+1)$ is created  as a result of the action of $\psi$ on a weighted path that has a step $(d+1,y+1)\xrightarrow[]{2}(d,y+2)$. Therefore  for non-weighted paths ending to the right of $\mathcal{F}_{l-1}^{1}$ there are no forbidden steps and the only "bad" paths are the ones that touch the axis $x=-1$. 

According to the reflection principle, to any such "bad" path $\mathcal{P}$ from $(0,0)$ to $(M,N)$ there corresponds a partially reflected path $\mathcal{P}^\prime$ from $(-2,0)$ to $(M,N)$. Excluding these paths gives us $ Z(L_N^g((0,0)\to (M,N)))=|L_N((0,0)\to (M,N)))|- |L_N((-2,0)\to (M,N))|$. Such procedure gives us an exact expression for $ Z(L_N^g((0,0)\to (M,N)))$ for $N<3l-1$.

But as  $N=3l-1$  a path visiting the axis $x=-1$ may actually be a "good" path. Such path, for example $\mathcal{P}_0$ (see Figure \ref{WF1R}), can be obtained after an action of $\psi$  (see Lemma \ref{l4}) on a "good" weighted path $\mathcal{P}$ as $\psi(\mathcal{P})=\{\mathcal{P}_{0},\mathcal{P}^\prime\}$. The path $\mathcal{P}_0$ has been reflected to $\mathcal{P}_{-2}$, and has then been excluded by subtraction of $|L_N((-2,0)\to (M,N))|$ from the total number of paths. To release from this contradiction we can notice that the path $\mathcal{P}_{-2}$ visits the axis $x=-l-1$. Therefore we can construct a path $\mathcal{P}_{-2l}$ by reflecting a portion of $\mathcal{P}_{-2}$ before the last visit of $x=-l-1$ (see Figure \ref{WF1R}). We then want to include the path $\mathcal{P}_{-2l}$ to compensate the exclusion of $\mathcal{P}_{0}$ by adding $|L_N((-2l,0)\to (M,N))|$, so $ Z(L_N^g((0,0)\to (M,N)))=|L_N((0,0)\to (l,N)))|- |L_N((-2,0)\to (M,N))|+ |L_N((-2l,0)\to (M,N))|$ for $N=3l-1$. 

\begin{figure}[h!]
	\centerline{\includegraphics[width=450pt]{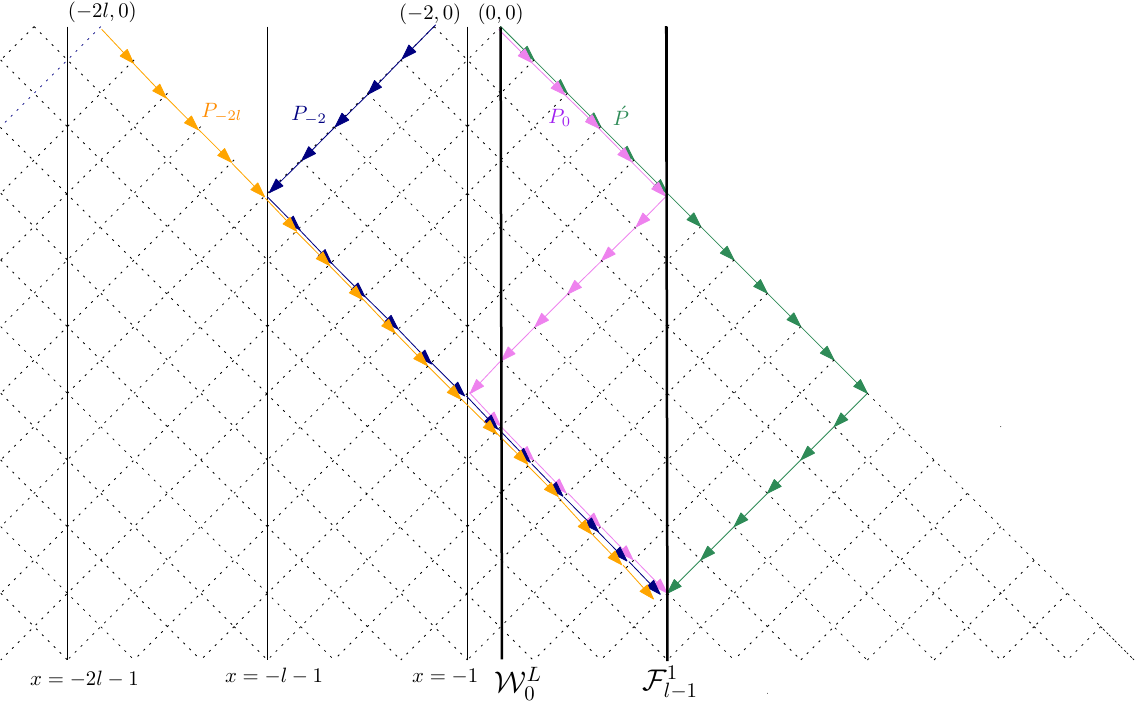}}
	\caption{Counting weighted paths that start at $(0,0)$ and end at $(M,N)$ for $M\geq l-1$ with the wall restriction $\mathcal{W}_0^L$ and the filter restriction $\mathcal{F}_{l-1}^1$} via unweighted walks counting.
	\label{WF1R}
\end{figure}

Further,  as $N>3l-1$ we will need to exclude "bad" paths that visited the axis $x=-1$ once before the action of $\psi$, but visit this axis twice after the action of $\psi$. Note, that they were excluded by subtraction of $|L_N((-2,0)\to (M,N))|$, but included by addition of $|L_N((-2l,0)\to (M,N))|$ . It is clear that the exclusion of such paths will lead to  subtraction of $|L_N((-2l-2,0)\to (M,N))|$.

Continuing this procedure, overall we get
\begin{equation}\label{zzzz}
Z(L_N^g(w(0,0)\to (M,N)))=\sum_{w\in \tilde{W}}\mathrm{sgn}(w)|L_N(w(0,0)\to (M,N))|,
\end{equation}
where $\tilde{W}$ is a subset of  elements $w\in W$, such that $x(w)\leq 0$ (the corresponding reflection axes lie at $x\leq 0$). In terms of $F_M^{(N)}$ it is
\begin{equation*}
Z(L_N^g(w(0,0)\to (M,N)))=F^{(N)}_M+\sum_{k=1}^{\bigl[\frac{N-l+1}{2l}\bigl]} F^{(N)}_{M+2kl}.
\end{equation*}

\end{proof}
\textit{Note}: The number of paths (\ref{desire1}) starting at $(0,0)$ and staying within the first strip and the number of paths (\ref{desire2}) that start at $(0,0)$ and end to the right of the filter differ by $\sum_{k=1}^{\bigl[\frac{N}{2l}+\frac{1}{2}\bigl]} F^{(N)}_{M-2kl}$. This is an  alternating  sum of the number of paths in the sets of unrestricted paths from $(2kl,0)$ and $(2kl-2,0)$, which for $k=1,2,\dots$ start at $x>0$. Cancellation of this sum corresponds to creation of the step $(l-1,y)\to (l-2,y+1)$  as a result of the action of $\psi$ on a path that has a step $(l,y+1)\xrightarrow[]{2}(l-1,y+2)$(see  Lemma \ref{l4}).The summation limit is determined following the same procedure as in Theorem \ref{TH4}.

\section{Counting paths with two filter restrictions}
\label{twofilters}
Let us consider the set of paths on $\mathcal{L}$ that start at $(0,0)$ and end at $(M,N)$  in the presence of the filter $\mathcal{F}_{l-1}^1$ and the filter $\mathcal{F}_{2l-1}^2$. We will denote it by $$L_N((0,0)\to (M,N)\;|\;\mathcal{F}_{l-1}^{1}, \mathcal{F}_{2l-1}^2).$$  
\begin{theorem}\label{TH3}
The weighted number of lattice paths from $(0,0)$ to $(M,N)$   with steps from $\mathbb{S}$ and with the filter restriction $\mathcal{F}_{l-1}^1$  and the filter restriction $\mathcal{F}_{2l-1}^2$ when $l-1\leq M< 2l-1$ is given by
\begin{equation}\label{th3}
Z(L_N((0,0)\to (M,N)\;|\;\mathcal{F}_{l-1}^1,\mathcal{F}_{2l-1}^{2}))=\sum_{k=0}^{\bigl[\frac{N-l+1}{2l}\bigl]} (-1)^{k}C^{(N)}_{M+2kl}+\sum_{k=2}^{\bigl[\frac{N}{2l}+1\bigl]} (-1)^{k-1}C^{(N)}_{M-2kl+2},
\end{equation}
where $C^{(N)}_M={N\choose\frac{N-M}{2}}$  is the number of unrestricted paths from $(0,0)$ to $(M,N)$.
\end{theorem}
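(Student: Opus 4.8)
The plan is to set up a reflection-principle argument adapted to the two filters, using the infinite dihedral group $W=\langle s_1,s_2\rangle$ generated by the reflection $s_1$ about the axis $x=l-1$ (the filter $\mathcal{F}_{l-1}^1$) and the reflection $s_2$ about the axis $x=2l-1$ (the filter $\mathcal{F}_{2l-1}^2$). Since $W$ is free on $s_1,s_2$ subject only to $s_1^2=s_2^2=\mathrm{id}$, the assignment $\chi(s_1)=+1$, $\chi(s_2)=-1$ extends to a character $\chi\colon W\to\{\pm1\}$, and it is this \emph{mixed} character, rather than the usual signature of the Gessel--Zeilberger setup, that the weights will force.

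First I would record two geometric facts valid in the endpoint range $l-1\le M<2l-1$. Any path reaching the line $x=2l-1$ is forced rightward and can never return to $x\le 2l-2$ (the step $(2l-1,y)\to(2l-2,y+1)$ is forbidden), so paths ending inside the strip must avoid $x=2l-1$ altogether; thus the filter $\mathcal{F}_{2l-1}^2$ behaves as an absorbing wall and contributes reflected terms with a minus sign, exactly as in Lemma \ref{l6}. By contrast, the path enters the strip through $\mathcal{F}_{l-1}^1$ and may bounce off $x=l-1$ from the right, each bounce $(l,y)\xrightarrow{2}(l-1,y+1)$ carrying weight $2$; applying the unfolding map $\psi$ of Lemma \ref{l4} replaces each such weight-$2$ bounce by the sum of a path and its reflection about $x=l-1$, so $\mathcal{F}_{l-1}^1$ contributes reflected terms with a plus sign, i.e. the additive behaviour of Lemma \ref{l5}. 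These are precisely the statements $\chi(s_2)=-1$ and $\chi(s_1)=+1$.

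Next I would run the involution from the proof of Theorem \ref{TH4}, but on the images of $(0,0)$ under $W$ weighted by $\chi$. A ``bad'' path is one touching $x=2l-1$; reflecting its initial segment up to the last such touch pairs it with a path started at $s_2(0,0)$, and iterating the unfolding at $\mathcal{F}_{l-1}^1$ inside these reflected families produces the successive images $s_1s_2,\,s_2s_1s_2,\dots$. Because the origin lies to the left of both filters, the triggering reflection is always the absorbing one $s_2$, so only elements whose reduced word ends in $s_2$ (together with the identity) survive; their $x$-images are $0,\,4l-2,\,-2l,\,6l-2,\,-4l,\dots$, that is $-2kl$ for $k\ge0$ and $2kl-2$ for $k\ge2$. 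Writing $|L_N(w(0,0)\to(M,N))|=C^{(N)}_{M-x(w)}$ and attaching $\chi(w)$ yields, after separating the two families, the signs $(-1)^k$ on $C^{(N)}_{M+2kl}$ and $(-1)^{k-1}$ on $C^{(N)}_{M-2kl+2}$, which is (\ref{th3}). The summation bounds follow as in Theorem \ref{TH4} by demanding that each reflected start reach $(M,N)$: placing $M$ at the left border $M=l-1$ gives $k\le\bigl[\frac{N-l+1}{2l}\bigr]$ for the first family, and at the right border $M=2l-2$ gives $k\le\bigl[\frac{N}{2l}+1\bigr]$ for the second.

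The hard part will be the interaction of the two filters' reflections, i.e. showing rigorously that the $\psi$-unfolding at $\mathcal{F}_{l-1}^1$ (sign $+1$) and the absorbing reflection at $\mathcal{F}_{2l-1}^2$ (sign $-1$) compose so that exactly the $s_2$-ending words survive and no spurious $s_1$-ending contributions remain; concretely, the interior images $2l-2,\,2l,\,-2,\dots$ must cancel. This requires verifying that the pairing of bad paths is a single involution which is simultaneously sign-reversing across $x=2l-1$ and weight-doubling-absorbing across $x=l-1$, so that all interior images cancel in pairs while the outer chain $\{\mathrm{id},\,s_2,\,s_1s_2,\,s_2s_1s_2,\dots\}$ is left intact.
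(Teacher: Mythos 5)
Your plan is essentially the paper's own argument: the type-2 filter at $x=2l-1$ acts as an absorbing wall contributing reflections with sign $-1$, the type-1 filter at $x=l-1$ contributes reflections with sign $+1$ via the unfolding map $\psi$ of Lemma \ref{l4}, and the surviving starting points are exactly the chain $0,\,4l-2,\,-2l,\,6l-2,\,-4l,\dots$ with the signs and summation bounds you state (your sign convention matches the paper's actual formula, even though the sentence in the paper defining $\mathrm{sgn}\tilde\psi$ and $\mathrm{sgn}\phi$ appears to have the two values swapped). The one substantive difference is that the step you defer as ``the hard part'' is the entire content of the paper's proof, and the paper does \emph{not} discharge it by a single global sign-reversing involution over all of $W$ --- indeed no such involution can exist here, precisely because $\chi(s_1)=+1$ makes the pairing across $x=l-1$ weight-doubling rather than cancelling, so the ``interior images'' $2l-2,\,2l,\,-2,\dots$ are not cancelled in pairs: they simply never arise. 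Instead the paper runs an explicit iteration in windows $N\in[(2k-1)l-1,(2k+1)l-1)$: each window alternately (i) reflects the newly reachable bad paths about $x=2l-1$, subtracting one new start $2kl-2$, and then (ii) unfolds the newly possible weight-$2$ bounce at $x=l-1$ inside the previously reflected family, adding one new start $-2kl$, and each correction is exact until the next window. If you want to keep your cleaner ``signed sum over words ending in $s_2$'' formulation, you should replace the claimed involution by this alternating reflect/unfold recursion (or an equivalent induction on the number of visits to the two axes); as written, the cancellation of $s_1$-ending words is asserted but not proved, and the involution you posit would actually fail.
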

\begin{proof}

Let us denote by $L_N^g((0,0)\to (M,N))$  the "good" paths: paths from $(0,0)$ to $(M,N)$ that obey the restrictions of the theorem and $L_N^b((0,0)\to (M,N))$ the "bad" paths: paths that touch the axis $x=2l-1$. To count the weight of "good" paths  we can subtract the weight  of "bad" paths from the weight of all paths:
\begin{equation}
\label{zg3}
    Z(L_N^g((0,0)\to (M,N)))=Z(L_N((0,0)\to (M,N)))- Z(L_N^b((0,0)\to (M,N))).
\end{equation}
We will now express this in terms of unrestricted paths. Firstly, we must note that due to Lemma \ref{l4} we have $Z(L_N((0,0)\to (M,N)))=|L_N((0,0)\to (M,N)))|$. The filter  restriction $\mathcal{F}_{2l-1}^{2}$ at $x=2l-1$ is equivalent to the wall restriction $\mathcal{W}_{2l-2}^R$ at $x=2l-2$ (see Lemma \ref{l6}). Below we will illustrate the counting of the weight of "bad" paths $Z(L_N^b((0,0)\to (M,N)))$ for values of $N$ from $l-1$ to $5l-1$ (see Figure \ref{figth33}).\\

\begin{figure}[h!]
	\centerline{\includegraphics[width=450pt]{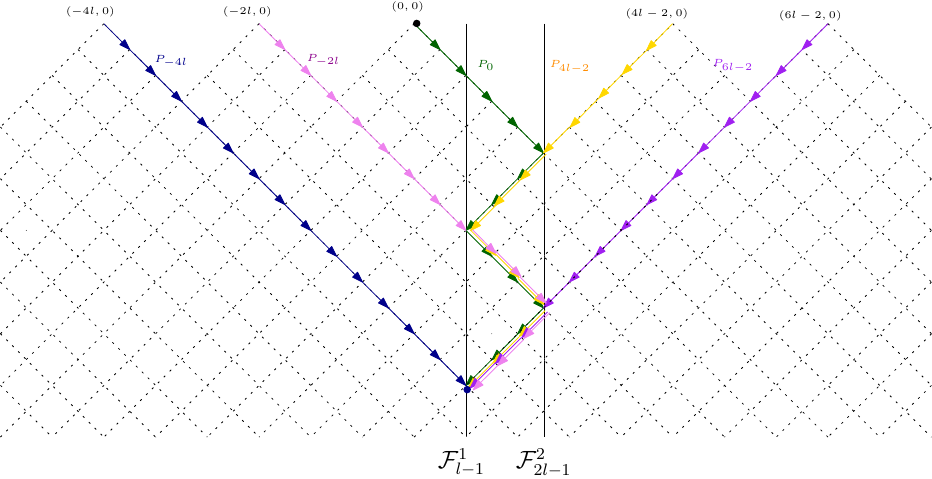}
	}
	\caption{Counting weighted walks from $(0,0)$ to $(M,N)$ for $l-1\leq M< 2l-1$ in the presence of filters $\mathcal{F}_{l-1}^{1}$ and $\mathcal{F}_{2l-1}^{2}$ in terms of unweighted walks counting.
	}
	\label{figth33}
\end{figure}

For $l-1\leq N<2l-1$ there are no "bad" paths, since the endpoint $(l,N)$ has not reached the axis $x=2l-1$.

For $ 2l-1\leq N<3l-1$ all "bad" paths from $(0,0)$ to $(M,N)$ have weight 1, so $Z(L_N^b((0,0)\to (M,N)))=|L_N^b((0,0)\to (M,N))|$. 
According to the reflection principle, to any "bad" path $\mathcal{P}_0$ from $(0,0)$ to $(M,N)$ there corresponds a path $\mathcal{P}_{4l-1}$ from $(4l-1,0)$ to $(M,N)$, so  $Z(L_N^b((0,0)\to (M,N))))=|L_N((4l-1,0)\to (M,N)))|$.

As $3l-1\leq N<4l-1$ the "bad" paths may have the step $(l,y+1)\xrightarrow[]{2}(l-1,y+2)$ and therefore have the weight 2. Therefore we have to exclude two unweighted paths to compensate the weight of the "bad" path $\mathcal{P}_0$. These paths are $\mathcal{P}_{4l-1}$ and $\mathcal{P}_{-2l}$, which is obtained from $\mathcal{P}_{4l-1}$ by reflecting the its portion until the first visit of $x=l-1$. 
Therefore we have $Z(L_N^b((0,0)\to (M,N)))=|L_N((4l-1,0)\to (M,N))|+ |L_N((-2l,0)\to (M,N))|$ for $3l-1\leq N<4l-1$. 

The path $\mathcal{P}_{-2l}$ coincides with $\mathcal{P}_0$ after its visit of $x=l-1$. But since it is forbidden for the initial path $\mathcal{P}_0$ to cross the axis $x=2l-1$ , it is also forbidden to the reflected path $\mathcal{P}_{-2l}$ to cross $x=2l-1$. Therefore we must exclude from $L_N((4l-1,0)\to (M,N))$ the paths touching the axis $x=2l-1$. We can again apply the reflection principle w.r.t. $x=2l-1$. To a path $\mathcal{P}_{-2l}$ corresponds a path $\mathcal{P}_{6l-1}$ from $(6l-1,0)$ to $(M,N)$. Therefore we have $Z(L_N^b((0,0)\to (M,N)))=|L_N((4l-1,0)\to (M,N))|+ |L_N((-2l,0)\to (M,N))|-|L_N((6l-1,0)\to (M,N))|$ for $4l-1<N<5l-1$. 

As $5l-1\leq N<6l-1$ the paths with weight 4 may appear. Expressing the number of weighted paths in terms of unweighted paths, similar to the above we get the path $\mathcal{P}_{-4l}$ from  by reflecting $\mathcal{P}_{6l-1}$ w.r.t $x=l-1$. Including this path gives us $Z(L_N^b((0,0)\to (M,N)))=|L_N((4l-1,0)\to (M,N))|+ |L_N((-2l,0)\to (M,N))|-|L_N((6l-1,0)\to (M,N))|+ |L_N((-4l,0)\to (M,N))|$. 

Overall, continuing this procedure of counting "bad" paths and substituting their number to (\ref{zg3}) we get
\begin{equation*}
  Z(L_N^g((0,0)\to (M,N)))= \sum_{w\in \hat{W}}(\mathrm{sgn}w)|L_N(w(0,0)\to (M,N))|,
\end{equation*}
where $\hat{W}$ is the group generated by ${\tilde{\psi},\phi}$. Here $\tilde{\psi}$ is the reflection w.r.t. the axis $x=2l-1$ and $\mathrm{sgn}\psi=1$ and $\phi$ is the reflection with respect to the axis $x=l-1$ and $\mathrm{sgn}\phi=-1$. Since
$
|L_N(w(0,0)\to (M,N))|={N\choose\frac{N-M+x(w)}{2}},
$
where $x(w)$ is the $x$ coordinate of $w(0,0)$, as in Figure \ref{figth3}. Expressing this in terms of $C_M^{(N)}$ we get (\ref{th3}). 

 Now we will find the upper limits of both sums in (\ref{th3}).
 Consider firstly the second sum in (\ref{th3}) which is the contribution from the paths that start to the right of $(0,0)$.  Their starting points are $(2kl-2,0), \;\;k=2,3\dots$. If we place endpoint $(M,N)$ at $M=2l-2$, the last path that could reach it from the right must start at the point $(N+2l-2,0)$. So for the upper limit $k_{max}$ of the second sum  we have $2k_{max}l-2=N+2l-2$, and therefore 
\begin{equation}
k_{max}=\Big[\frac{N}{2l}+1\Big]. \nonumber
\end{equation} 
Similarly, consider the first sum in (\ref{th3}) which is the contribution from the paths that start to the left of $(0,0)$. Their starting points are $(-2\tilde{k}l,0),
\;\; \tilde{k}=0,1,\dots $. If we place endpoint $(M,N)$ at $M=l-1$, the last path that could reach it from the left must start at point $(-N+l-1,0)$. So for the upper limit $\tilde{k}_{max}$ of the first sum we have $2\tilde{k}_{max}l=N-l+1$, therefore
\begin{equation}
\tilde{k}_{max}=\Big[\frac{N-l+1}{2l}\Big]. \nonumber
\end{equation} 
\end{proof}
Due to translation invariance we can generalize (\ref{th3}):
\begin{remark}\label{REM}
The weighted number of lattice paths from $(-2Al,0)$ to $(M,N)$  for $A\geq 0$  with steps from $\mathbb{S}$   and with the filter restriction $\mathcal{F}_{l-1}^1$  and the filter restriction $\mathcal{F}_{2l-1}^2$ when $l-1\leq M< 2l-1$ is given by
\begin{eqnarray}\label{th32}
Z(L_N((-2Al,0)\to (M,N)\;|\;\mathcal{F}_{l-1}^1,\mathcal{F}_{2l-1}^{2}))=&\sum\limits_{k=A}^{\bigl[\frac{N-l+1}{2l}\bigl]} (-1)^{k-A}C^{(N)}_{M+2kl}+\\
+&\sum\limits_{k=A}^{\bigl[\frac{N}{2l}-1\bigl]} (-1)^{k-A+1}C^{(N)}_{M-2(k+2)l+2},\nonumber
\end{eqnarray}
The weighted number of lattice paths  from $(-2Bl-2,0)$ to $(M,N)$  for $B\geq 0$  with steps from $\mathbb{S}$ and with the filter restriction $\mathcal{F}_{l-1}^1$  and the filter restriction $\mathcal{F}_{2l-1}^2$ when $l-1\leq M< 2l-1$ is given by
\begin{eqnarray}\label{th33}
Z(L_N((-2Bl-2,0)\to (M,N)\;|\;\mathcal{F}_{l-1}^1,\mathcal{F}_{2l-1}^{2}))=&\sum\limits_{k=B}^{\bigl[\frac{N-l-1}{2l}\bigl]} (-1)^{k-B}C^{(N)}_{M+2kl-2}\\
+&\sum\limits_{k=B}^{\bigl[\frac{N-2}{2l}-1\bigl]}(-1)^{k-B+1}C^{(N)}_{M-2(k+2)l},\nonumber
\end{eqnarray}
where $C^{(N)}_M={N\choose\frac{N-M}{2}}$  is the number of unrestricted paths from $(0,0)$ to $(M,N)$.
\end{remark}

\section{Counting paths with wall and two filter restrictions}
\label{walltwofilters}
Let us consider set of paths on $\mathcal{L}$ that start at $(0,0)$ and end at $(M,N)$  in the presence of the  wall $\mathcal{W}_0^L$ the  two filters $\mathcal{F}_{l-1}^1$ and $\mathcal{F}_{2l-1}^2$. We will denote it by $$L_N((0,0)\to (M,N)\;|\;\mathcal{W}_0^L,\mathcal{F}_{l-1}^{1}, \mathcal{F}_{2l-1}^2).$$  
\begin{theorem}\label{TH5}
The weighted number of lattice paths from $(0,0)$ to $(M,N)$  with steps from $\mathbb{S}$ and with the  wall $\mathcal{W}_0^L$ and filter restrictions $\mathcal{F}_{l-1}^1$  and $\mathcal{F}_{2l-1}^2$ when $l-1\leq M< 2l-1$ is given by
\begin{equation}\label{th4}
Z(L_N((0,0)\to (M,N)\;|\;\mathcal{W}_0^L, \mathcal{F}_{l-1}^1,\mathcal{F}_{2l-1}^{2}))=
\sum_{k=0}^{\bigl[\frac{N-l+1}{4l}\bigl]} F^{(N)}_{M+4kl}+\sum_{k=0}^{\bigl[\frac{N-2l}{4l}\bigl]}F^{(N)}_{M-4kl-4l},
\end{equation}
where $F_M^{(N)}= {N\choose\frac{N-M}{2}}-{N\choose{\frac{N-M}{2}-1}}$.
\end{theorem}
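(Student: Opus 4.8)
The plan is to add the wall $\mathcal{W}_0^L$ on top of the two-filter configuration of Theorem \ref{TH3} by the same Gessel--Zeilberger reflection mechanism already used in Theorems \ref{TH4} and \ref{TH2}, while importing the effect of the second filter $\mathcal{F}_{2l-1}^2$ through the translated two-filter counts recorded in Remark \ref{REM}. Concretely, I would first establish the reduction
\begin{equation*}
Z(L_N((0,0)\to(M,N)\,|\,\mathcal{W}_0^L,\mathcal{F}_{l-1}^1,\mathcal{F}_{2l-1}^2))=\sum_{A\geq0}Z(L_N((-2Al,0)\to(M,N)\,|\,\mathcal{F}_{l-1}^1,\mathcal{F}_{2l-1}^2))-\sum_{B\geq0}Z(L_N((-2Bl-2,0)\to(M,N)\,|\,\mathcal{F}_{l-1}^1,\mathcal{F}_{2l-1}^2)),
\end{equation*}
where the starting points $(-2Al,0)$ (with sign $+$) and $(-2Bl-2,0)$ (with sign $-$), $A,B\geq0$, run over the part with $x\leq0$ of the orbit of $(0,0)$ under the affine group generated by the reflection across $x=-1$ (the wall) and the reflection across $x=l-1$ (the first filter). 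This is exactly the index set and the alternating sign pattern that already appeared in Theorem \ref{TH2}; the only change is that at each reflected starting point the ``base'' weighted count is now the \emph{two-filter} count, which already carries filter $\mathcal{F}_{2l-1}^2$.

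The crux, and the step I expect to cost the most work, is justifying this reduction. As in Lemma \ref{l4} and Theorem \ref{TH2}, I would unfold the weight-$2$ steps of $\mathcal{F}_{l-1}^1$ by the map $\psi$, turning weighted paths into ordinary paths that may cross $x=l-1$ freely; on these unfolded paths the wall condition at $x=0$ becomes an ordinary barrier condition at $x=-1$. A path that violates the wall may first wander to the right across $x=l-1$, picking up weight, and return before its \emph{last} visit to $x=-1$, so a single reflection across $x=-1$ is not weight preserving; this is precisely why the full orbit of the group generated by the two reflections, and not merely the pair $(0,0),(-2,0)$, enters. I would verify that pairing a bad path with the reflection of its segment up to the last visit of the relevant axis is a sign-reversing, weight-preserving involution, so that all contributions from $x<0$ starting points cancel in the manner of the argument in Theorem \ref{TH4} and only the good paths survive. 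The delicate bookkeeping is to confirm that $\mathcal{F}_{l-1}^1$, which here plays a double role (as a reflection axis for the wall mechanism and as a filter inside each two-filter count), is not double-counted, and that $\mathcal{F}_{2l-1}^2$, lying to the right of the endpoint strip $l-1\leq M<2l-1$ and of every reflected starting point, is untouched by these left-hand reflections and is therefore correctly and completely accounted for by the Remark \ref{REM} counts.

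With the reduction in hand, the remainder is a finite manipulation. I would substitute the closed forms (\ref{th32}) and (\ref{th33}) from Remark \ref{REM}, interchange the order of summation, and collapse the inner geometric sign sums using $\sum_{A=0}^{k}(-1)^{k-A}$, which equals $1$ for even $k$ and $0$ for odd $k$. This annihilates every odd-indexed term and doubles the period from $2l$ to $4l$, leaving only shifts by multiples of $4l$. The surviving right-bound terms then appear in the combination $C^{(N)}_{M+4jl}-C^{(N)}_{M+4jl+2}=F^{(N)}_{M+4jl}$ and the left-bound terms in $C^{(N)}_{M-4jl-4l}-C^{(N)}_{M-4jl-4l+2}=F^{(N)}_{M-4jl-4l}$, which reproduces the two sums of (\ref{th4}). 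Finally I would read off the limits $\big[\frac{N-l+1}{4l}\big]$ and $\big[\frac{N-2l}{4l}\big]$ from the ranges in Remark \ref{REM} after halving, by the same endpoint argument used in Theorems \ref{TH3} and \ref{TH4}: place $(M,N)$ at each boundary of the strip and locate the farthest admissible reflected starting point. The combinatorial collapse is routine once the reduction of the first paragraph is secured, so essentially all of the genuine work lies in the weight-preserving involution described above.
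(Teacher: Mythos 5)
Your proposal is correct and follows essentially the same route as the paper: the paper likewise replaces the wall by the signed family of starting points $(-2il,0)$ and $(-2il-2,0)$ (the $x\le 0$ part of the reflection orbit), evaluates each contribution with the translated two-filter counts of Remark \ref{REM}, and collapses the alternating sums into the period-$4l$ sums of $F^{(N)}$. The only cosmetic difference is that the paper justifies this reduction by matching boundary values on the axis $x=l-1$ against the signed expansion already obtained in Theorem \ref{TH2}, rather than by re-running a sign-reversing involution from scratch.
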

\begin{proof}
We will start by using results of Theorem \ref{TH2} as the boundary conditions on the axis $x=l-1$ and then apply Theorem \ref{TH3}. 

Let us consider the path restrictions of Theorem \ref{TH2} (the wall $\mathcal{W}_0^L$ and  filter $\mathcal{F}_{l-1}^1$). According to (\ref{zzzz}) the weighted number of paths that end $(l-1,N)$ is
\begin{eqnarray}\label{qgg}
Z(L_N^g(w(0,0)\to (l-1,N)))=\sum_{w: x(w(0,0))\leq 0}(\mathrm{sgn}(w))|L_N(w(0,0)\to (l-1,N))|=\\
=\sum_{i=0}^{\ldots}|L_N((w_1w_2)^i(0,0)\to (l-1,N))|-
\sum_{i=0}^{\ldots}|L_N(w_1(w_1w_2)^i(0,0)\to (l-1,N))|.\nonumber
\end{eqnarray}
Here and below by $\sum^{\dots}$ we denote the summation over all possible $i$ that give nonzero contribution. These boundary conditions on the axis $x=l-1$ determine the number  of paths that end to the right of the filter $\mathcal{F}_{l-1}^1$. Therefore  we may change conditions to the left of the filter as long as the weighted number of paths that end at $(l-1,N)$ remains the same. Expression (\ref{qgg}) and Lemma \ref{l4} suggest that 
we can consider two separate sets  of initial conditions with no wall restriction. The first set of initial conditions  includes $(w_1w_2)^i(0,0)$ as the path starting points while only the filter $\mathcal{F}_{l-1}^1$ restriction is present.  The second  set of initial conditions  includes $w_1(w_1w_2)^i(0,0)$ the path starting points and only the filter restriction $\mathcal{F}_{l-1}^1$.
\begin{figure}[h!]
\label{F12}
	\centerline{\includegraphics[width=450pt]{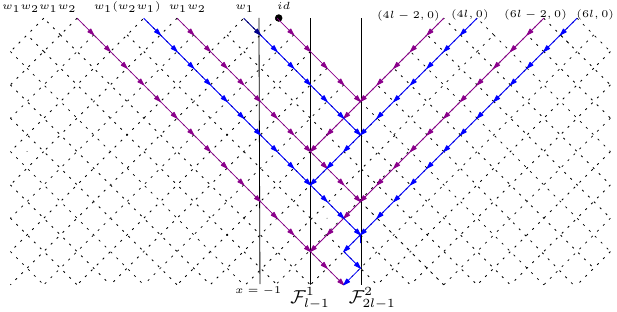}
}
	\caption{Counting weighted walks from $(0,0)$ to $(M,N)$ for $l-1\leq M< 2l-1$ in the presence of  the wall $\mathcal{W}_0^L$ and filters $\mathcal{F}_{l-1}^{1}$ and $\mathcal{F}_{2l-1}^{2}$ in terms of unweighted walks counting. The starting points are labeled by the corresponding reflection group elements acting on $(0,0)$. The violet paths contribute to $Z_+^0$ and the blue paths to $Z_-^0$. It is clear that the path starting from $w_1w_2$ contributes to both $Z_+^0$ and $Z_+^1$.The contribution of this path is canceled in $Z_+^0+Z_+^1$.
 }
\end{figure}

To prove (\ref{th4}) we will take these two sets of initial conditions with $\mathcal{F}_{l-1}^1$  and place  another filter restriction $\mathcal{F}_{2l-1}^2$. Therefore the problem is reduced to two separate problems. The first problem is to count weighted number  $Z_+^i$ of paths  from  each $(w_1w_2)^i(0,0)$ to $(M,N)$ in the presence of $\mathcal{F}_{l-1}^1$  and  $\mathcal{F}_{2l-1}^2$ and to  sum these numbers over $i$. The second problem is to count weighted number  $Z_-^i$ of paths  from all $w_1(w_1w_2)^i(0,0)$ to $(M,N)$ in the presence of $\mathcal{F}_{l-1}^1$  and  $\mathcal{F}_{2l-1}^2$  and to  sum these numbers over $i$. The required weighted number of paths will be given by
\begin{equation}
\label{z12}
Z(L_N((0,0)\to (M,N)\;|\;\mathcal{W}_0^L, \mathcal{F}_{l-1}^1,\mathcal{F}_{2l-1}^{2}))=\sum_{i=0}^{\dots}Z_+^i-\sum_{i=0}^{\dots}Z_-^i.
\end{equation}
We will further specify the summation limits in (\ref{z1}) and (\ref{z2}).  

 To obtain expression for $Z_+^i$ and $Z_-^i$ one can use Theorem \ref{TH3}  and Remark \ref{REM}.   Note that $(w_1w_2)^i(0,0)=(-2il,0)$  and $w_1(w_1w_2)^i(0,0)=(-2il-2,0)$ for $i=0,1,\dots $.  So we can apply Remark \ref{REM} for $A=i$ and $B=i$ to calculate $Z_+^i$ and $Z_-^i$ respectively:
 \begin{eqnarray*}
  Z_+^i=\sum_{k=i}^{\bigl[\frac{N-l+1}{2l}\bigl]} (-1)^{k-i}C^{(N)}_{M+2kl}+\sum_{k=i}^{\bigl[\frac{N}{2l}-1\bigl]} (-1)^{k-i+1}C^{(N)}_{M-2(k+2)l+2},\\
     Z_-^i=\sum_{k=i}^{\bigl[\frac{N-l-1}{2l}\bigl]} (-1)^{k-i}C^{(N)}_{M+2kl-2}+\sum_{k=i}^{\bigl[\frac{N-2}{2l}-1\bigl]} (-1)^{k-i+1}C^{(N)}_{M-2(k+2)l}.
 \end{eqnarray*}
 
Firstly, we must calculate  contributions to (\ref{z12}) that come from the sum of $Z_+^i$. Most of the terms in this sum will cancel each other. Below we will illustrate this for $Z_+^0$ and $Z_+^1$:
\begin{equation*}
Z_+^0=\sum_{k=0}^{\bigl[\frac{N-l+1}{2l}\bigl]}(-1)^kC^{(N)}_{M+2kl}+\sum_{k=0}^{\bigl[\frac{N}{2l}-1\bigl]} (-1)^{k-1}C^{(N)}_{M-2(k+2)l+2},
\end{equation*}
\begin{equation*}
Z_+^1=\sum_{k=1}^{\bigl[\frac{N-l+1}{2l}\bigl]}(-1)^{k-1}C^{(N)}_{M+2kl}+\sum_{k=1}^{\bigl[\frac{N}{2l}-1\bigl]} (-1)^{k}C^{(N)}_{M-2(k+2)l+2}.
\end{equation*}
When added together most of the terms cancel each other and we get $$Z_+^0 + Z_+^1=C_M^{(N)}-C_{M-4l+2}^{(N)}$$
In fact,  the similar cancellation of terms will take place for all such pairs of  $Z_+^{i}$  and $Z_+^{i+1}$ for even $i$. 
\begin{eqnarray*}
Z_+^i+Z_+^{i+1}=\sum_{k=i}^{\bigl[\frac{N-l+1}{2l}\bigl]}(-1)^{k-i}C^{(N)}_{M+2kl}+\sum_{k=i}^{\bigl[\frac{N}{2l}-1\bigl]} (-1)^{k-i+1}C^{(N)}_{M-2(k+2)l+2}
+\\
+\sum_{k=i+1}^{\bigl[\frac{N-l+1}{2l}\bigl]}(-1)^{k-i-1}C^{(N)}_{M+2kl}+\sum_{k=i+1}^{\bigl[\frac{N}{2l}-1\bigl]} (-1)^{k-i}C^{(N)}_{M-2(k+2)l+2}=C^{(N)}_{M+2il}+C^{(N)}_{M-2il-4l+2}.
\end{eqnarray*}
Denoting $i=2k$ we get
\begin{equation}
\label{z1}
\sum_{i=0}^{\dots}Z_+^i=\sum_{k=0}^{\bigl[\frac{N-l+1}{4l}\bigl]}C^{(N)}_{M+4kl}-\sum_{k=0}^{\bigl[\frac{N-2l}{4l}\bigl]}C^{(N)}_{M-4kl-4l+2}.
\end{equation}
 Contributions to (\ref{z12}) that come from the sum of   $Z_-^i$ are obtained following the same observation. Similarly, we get
\begin{equation}
\label{z2}
\sum_{i=0}^{\dots}Z_-^i=\sum_{k=0}^{\bigl[\frac{N-l-1}{4l}\bigl]}C^{(N)}_{M+4kl+2}- \sum_{k=0}^{\bigl[\frac{N-2l-2}{4l}\bigl]}C^{(N)}_{M-4kl-4l}.
\end{equation}
The summation limits are obtained in the same manner as in Theorem \ref{TH3}. For example, the second sum  in (\ref{z1}) is the contribution from the paths with starting points $(4(k+1)l-2,0), \;\;k=0,1\dots$. The last path that could reach $M=2l-2$ from the right starts at the point $(N+2l-2,0)$. So the upper limit of summation
$
k_{max}=\Big[\frac{N-2l}{4l}\Big]$.
Similarly, the first sum  in (\ref{z1}) is the contribution from the paths with starting points $(-4\tilde{k}l,0),
\;\; \tilde{k}=0,1,\dots $. The last path that could reach $M=l-1$ from the left starts at the point $(-N+l-1,0)$. So  the upper limit of summation 
$
\tilde{k}_{max}=\Big[\frac{N-l+1}{4l}\Big]
$. 
Substituting  (\ref{z1}) (\ref{z2}) to  (\ref{z12}) we obtain (\ref{th4}).
\end{proof}

\section{Counting paths with wall and multiple filter restrictions}
\label{multfilt}
In previous sections we have proven auxiliary theorems that provide enumerative formulas for counting paths in the presence of the wall and a number of filters less than three. We are now ready to prove the main theorem. Let us consider the set of paths on $\mathcal{L}$ that start at $(0,0)$ and end at $(M,N)$  with steps from $\mathbb{S}$ in the presence of the filter $\mathcal{F}_{l-1}^1$ of type 1 and the filters $\mathcal{F}_{nl-1}^2$ of type 2, where $n=2,3,\dots$. We will denote this set by 
\begin{equation}\label{mmm}
L_N((0,0)\to (M,N)\;|\;\mathcal{F}_{l-1}^1,\{\mathcal{F}_{(n+1)l-1}^{2}\}, n\in \mathbb{Z}_+).
\end{equation}
\begin{figure}[h!]
	\centerline{\includegraphics[width=350pt]
	{filterss.pdf}
}
	\caption{The arrangement of filters $\mathcal{F}_{l-1}^1$ and the filters $\mathcal{F}_{(j+1)l-1}^2$, where $j=1,2,\dots$. The $j$-th strip is located between $\mathcal{F}_{(j-1)l-1}^2$ and $\mathcal{F}_{jl-1}^2$.
}
\label{figth3}
\end{figure}
\begin{definition}
	  We will denote by multiplicity function in the $j$-th strip the weighted number of paths in set (\ref{mmm})  with the endpoint $(M,N)$ that lies within  $(j-1)l-1 \leq M< jl-2$
	\begin{equation}
	M^j_{(M,N)} = Z(L_N((0,0)\to (M,N)\;|\;\mathcal{F}_{l-1}^1,\{\mathcal{F}_{(n+1)l-1}^{2}\}, n\in \mathbb{Z}_+)),
	\end{equation}
	where $M\geq 0$, $j\geq 2$ and $j=\Big[\frac{M+1}{l}+1\Big]$. 
\end{definition}
\begin{theorem}
\label{tj}
	The multiplicity function in the $j$-th strip  is given by
	\begin{eqnarray*}
	\label{mj}
	M^{j}_{(M,N)} = 2^{j-2}\Big(\sum_{k=0}^{\big[\frac{N-(j-1)l+1}{4l}\big]}P_j(k) F^{(N)}_{M+4kl}+\sum_{k=0}^{\big[\frac{N-jl}{4l}\big]}P_j(k)F^{(N)}_{M-4kl-2jl} -\\-\sum_{k=0}^{\big[\frac{N-(j+1)l+1}{4l}\big]}Q_j(k)F^{(N)}_{M+2l+4kl}-\sum_{k=0}^{\big[\frac{N-jl-2l}{4l}\big]}Q_j(k)F^{(N)}_{M-4kl-2(j+1)l} \Big),
	\end{eqnarray*}
	where
	\begin{equation}
	\label{ppqq}
	P_j(k)=\sum_{i=0}^{\big[\frac{j}{2}\big]}\binom{j-2}{2i}\binom{k-i+j-2}{j-2},\;\;\;\;\;
	Q_j(k)=\sum_{i=0}^{\big[\frac{j}{2}\big]}\binom{j-2}{2i+1}\binom{k-i+j-2}{j-2},
	\end{equation}
	\begin{equation*}
F_M^{(N)}= {N\choose\frac{N-M}{2}}-{N\choose{\frac{N-M}{2}-1}}.
\end{equation*}
\end{theorem}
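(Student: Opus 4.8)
The plan is to induct on the strip index $j$, using the reflection-principle bookkeeping from the proof of Theorem~\ref{TH5} as the engine of the inductive step. The base case $j=2$ is Theorem~\ref{TH5}: setting $j=2$ in the claimed formula gives $P_2(k)=1$, $Q_2(k)=0$ and prefactor $2^{0}=1$, so the two $Q$-sums drop out and the two $P$-sums reduce to the right-hand side of (\ref{th4}), with matching summation limits $[\frac{N-l+1}{4l}]$ and $[\frac{N-2l}{4l}]$.

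For the inductive step I assume the formula in strip $j$ and count paths ending in strip $j+1$, i.e. with $jl-1\le M<(j+1)l-2$. Every such path crosses the type-2 filter $\mathcal{F}_{jl-1}^{2}$ that bounds strip $j+1$ on the left. Following Theorem~\ref{TH5}, I would read the strip-$j$ count as boundary data on the axis $x=jl-1$ and re-encode it, via the reflection about $x=-1$ and the successive filter axes, as a signed superposition of virtual point sources: the four families of $F^{(N)}$-terms in the strip-$j$ formula are exactly these sources, with $P_j(k)$ and $Q_j(k)$ recording the multiplicities of the even and odd reflection classes respectively. Crossing $\mathcal{F}_{jl-1}^{2}$ multiplies every weight by $2$ (Lemma~\ref{l7}), upgrading the prefactor $2^{j-2}$ to $2^{(j+1)-2}$; reflection off the next filter $\mathcal{F}_{(j+1)l-1}^{2}$ generates the shifted right-hand image families (right-hand offsets $-2jl\mapsto -2(j+1)l$, while the left-hand positions $+4kl$ and $+2l+4kl$ are inherited unchanged). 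Propagation of each virtual source through this two-filter window is governed by Remark~\ref{REM}, and the pairwise telescoping $Z_+^{i}+Z_+^{i+1}$, $Z_-^{i}+Z_-^{i+1}$ used there collapses the resulting double sums into the single sums of the strip-$(j+1)$ formula.

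The combinatorial core of the step is that this transfer sends $(P_j,Q_j)$ to $(P_{j+1},Q_{j+1})$ via
\begin{equation*}
P_{j+1}(k)-P_{j+1}(k-1)=P_j(k)+Q_j(k-1),\qquad Q_{j+1}(k)-Q_{j+1}(k-1)=P_j(k)+Q_j(k),
\end{equation*}
with $P_j(-1)=Q_j(-1)=0$. To check that the closed forms (\ref{ppqq}) obey these relations I would pass to generating functions $\hat P_j(t)=\sum_{k\ge 0}P_j(k)t^k$ and $\hat Q_j(t)=\sum_{k\ge 0}Q_j(k)t^k$. Using $\sum_{k\ge 0}\binom{k-i+j-2}{j-2}t^k=t^i(1-t)^{-(j-1)}$ together with the even/odd splittings $\sum_i\binom{j-2}{2i}t^i=\tfrac12\big((1+\sqrt t)^{j-2}+(1-\sqrt t)^{j-2}\big)$ and $\sum_i\binom{j-2}{2i+1}t^i=\tfrac1{2\sqrt t}\big((1+\sqrt t)^{j-2}-(1-\sqrt t)^{j-2}\big)$, the substitution $t=s^2$ yields $\hat P_j=\tfrac12(A_j+B_j)$ and $\hat Q_j=\tfrac1{2s}(A_j-B_j)$ with $A_j=\big[(1-s)^{j-1}(1+s)\big]^{-1}$ and $B_j=\big[(1-s)(1+s)^{j-1}\big]^{-1}$. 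Since $A_{j+1}=(1-s)^{-1}A_j$ and $B_{j+1}=(1+s)^{-1}B_j$, a short computation turns the two recurrences into the equivalent closed identities $\hat P_{j+1}=(\hat P_j+t\hat Q_j)/(1-t)$ and $\hat Q_{j+1}=(\hat P_j+\hat Q_j)/(1-t)$, which hold by inspection; this closes the induction. The four upper summation limits are fixed exactly as in Theorems~\ref{TH3} and~\ref{TH5}, by placing the endpoint at each edge of strip $j+1$ and locating the furthest source reachable in $N$ steps.

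I expect the main obstacle to be the signed-source bookkeeping in the inductive step: tracking precisely which reflection images survive, with what sign and multiplicity, as the window advances by one strip, and confirming that the telescoping of Remark~\ref{REM} reproduces exactly the parity split encoded by $P_{j+1}$ and $Q_{j+1}$ rather than some other regrouping. Once the recurrence is correctly extracted from this geometry, its verification against (\ref{ppqq}) is the routine generating-function computation above.
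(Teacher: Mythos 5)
Your proposal follows essentially the same route as the paper's proof: induction on $j$ with Theorem \ref{TH5} as the base case, the inductive step carried out by reading the strip-$j$ formula as signed virtual sources feeding boundary data on $x=jl-1$, doubling the weight on crossing $\mathcal{F}_{jl-1}^{2}$ (Lemma \ref{l7}), propagating each source through the next two-filter window via the translated version of Remark \ref{REM}, and telescoping; your differenced recurrence $P_{j+1}(k)-P_{j+1}(k-1)=P_j(k)+Q_j(k-1)$, $Q_{j+1}(k)-Q_{j+1}(k-1)=P_j(k)+Q_j(k)$ is exactly the paper's summed recurrence (\ref{p})--(\ref{q}). The only genuine divergence is the final verification that the closed forms (\ref{ppqq}) satisfy this recurrence, which you do by a (correct) generating-function computation with $\hat P_j=\tfrac12(A_j+B_j)$, $\hat Q_j=\tfrac1{2s}(A_j-B_j)$, whereas the paper uses Pascal's rule $\binom{j-2}{2i}+\binom{j-2}{2i+1}=\binom{j-1}{2i+1}$ together with the hockey-stick identity --- an equivalent, equally routine step.
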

\begin{proof}
 
We will firstly use the expression (\ref{th4}) for the weighted number of paths  obtained in Theorem \ref{TH5} as the base of induction for the proof of (\ref{mj}). The multiplicity $M_{(M,N)}^2$ in the second strip is given by expression (\ref{th4}):

\begin{equation}\label{m2}
M_{(M,N)}^2(l)=Z(L_N((0,0)\to (M,N)\;|\;\mathcal{W}_0^L, \mathcal{F}_{l-1}^1,\mathcal{F}_{2l-1}^{2}))=
\sum_{k=0}^{\bigl[\frac{N-l+1}{4l}\bigl]} F^{(N)}_{M+4kl}+\sum_{k=0}^{\bigl[\frac{N-2l}{4l}\bigl]}F^{(N)}_{M-4kl-4l}.
\end{equation}
Clearly this expression satisfies (\ref{mj}) for $j=2$ with $P_2(k)=1,\;Q_2(k)=0$.

Expression (\ref{m2}) serves as the base of induction. For the inductive step we consider
	\begin{eqnarray*}
	M^j_{(M,N)} = 2^{j-2}\Big(\sum_{k=0}^{\big[\frac{N-(j-1)l+1}{4l}\big]}P_j(k)F^{(N)}_{M+4kl}+\sum_{k=0}^{\big[\frac{N-jl}{4l}\big]}P_j(k)F^{(N)}_{M-4kl-2jl} -\\-\sum_{k=0}^{\big[\frac{N-(j+1)l+1}{4l}\big]}Q_j(k)F^{(N)}_{M+2l+4kl}-\sum_{k=0}^{\big[\frac{N-(j+2)l}{4l}\big]}Q_j(k)F^{(N)}_{M-4kl-2(j+1)l} \Big),
	\end{eqnarray*}
 to hold for $j$-th strip, located between  $\mathcal{F}_{(j-1)l-1}^2$ and $\mathcal{F}_{jl-1}^2$, where $P_j(k)$ and $Q_j(k)$ given by (\ref{ppqq}). We will use the auxiliary theorems proven in the previous sections to get the multiplicity function for the $j+1$-th strip, which  is located between $\mathcal{F}_{jl-1}^2$ and $\mathcal{F}_{(j+1)l-1}^2$.  We will proceed the proof in 2 steps. 

At the first step we consider the setup with filters $\mathcal{F}_{l-1}^1,\mathcal{F}_{2l-1}^2, \dots , \mathcal{F}_{jl-1}^2$ but without  filter $\mathcal{F}_{(j+1)l-1}^2$. We will denote the weighted number of paths from $(0,0)$ to $(M,N)$  under these restrictions as 
\begin{equation}
	\tilde{M}^{j}_{(M,N)}(l) = Z(L_N((0,0)\to (M,N)\;|\;\mathcal{F}_{l-1}^1,\dots, \mathcal{F}_{jl-1}^{2}),
	\end{equation}
For  $(j-1)l-1\leq M< jl-1$  this function coincides with multiplicity function for $j$-th strip $M^j_{(M,N)}$ but for $M\geq jl-1$  the weighted number of paths is 
\begin{equation}
\label{kk}
	\tilde{M}^{j}_{(M,N)}(l) = 2^{j-1}\Big(\sum_{k=0}^{\big[\frac{N-jl+1}{4l}\big]}P_j(k)F^{(N)}_{M+4kl} -\sum_{k=0}^{\big[\frac{N-(j+2)l+1}{4l}\big]}Q_j(k)F^{(N)}_{M+2l+4kl} \Big),
\end{equation} 
 This expression  differs from $M^j_{(M,N)}$ by two sums which correspond to number of unrestricted paths in the sets  with the initial points  with $x>jl-1$ and by a factor $2$. It could be proven  similar to Theorem \ref{TH2} via unweighted path counting by creation of the step $(jl-1,y)\to (jl-2,y+1)$  as a result of the action of $\psi$ on a path that has a step $(jl,y+1)\xrightarrow[]{2}(jl-1,y+2)$ and doubling of the total number of paths when passing through $\ \mathcal{F}_{jl-1}^2$(see  Lemma \ref{l7}).

 At the second step  we consider  value of (\ref{kk}) on the axis $x=jl-1$ as the boundary conditions and use the same proof technique as in Theorem \ref{TH5}. The boundary conditions on the axis $x=jl-1$ determine the number of paths that end to the right of the filter $\mathcal{F}_{jl-1}^2$. Therefore we may change conditions to the left of the filter as long as the weighted number of paths that end at $(jl-1,N)$ remains the same.  
 It is clear that we can consider  $(w_1w_2)^i(0,0)$ and $w_1(w_1w_2)^i(0,0)$ as the path starting points:
 \begin{eqnarray*} 
	\tilde{M}^{j}_{(M,N)} = 2^{j-1}\Big(&\sum\limits_{k=0}^{\big[\frac{N-jl+1}{4l}\big]} P_j(k)\Big(&|L_N((w_1w_2)^{2k}(0,0)\to (jl-1,N))|-\\
	& &-|L_N(w_1(w_2w_1)^{2k}(0,0)\to (jl-1,N))|\Big)-\\
	-&\sum\limits_{k=0}^{\big[\frac{N-(j+2)l+1}{4l}\big]} Q_j(k)\Big(&|L_N((w_1w_2)^{2k+1}(0,0) \to (jl-1,N))|-\\
	& &-|L_N(w_1(w_2w_1)^{2k+1}(0,0)\to (jl-1,N))|\Big)\Big),
	\end{eqnarray*} 
	\begin{figure}[h!]
\label{FJ}
	\centerline{\includegraphics[width=450pt]{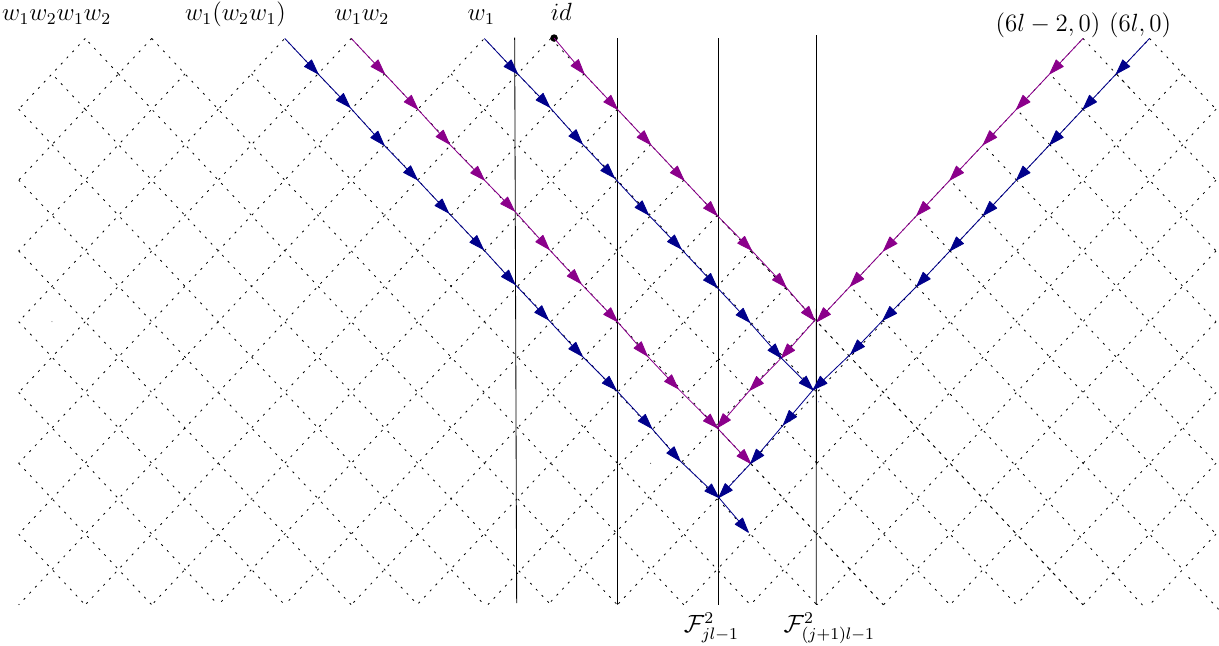}
}
	\caption{Counting weighted walks from $(0,0)$ to $(M,N)$ for $2l-1\leq M< 3l-1$ in the presence of filters $\mathcal{F}_{2l-1}^{1}$ and $\mathcal{F}_{3l-1}^{2}$ in terms of unweighted path counting.
The starting points are labeled by the corresponding reflection group elements acting on $(0,0)$.The violet paths contribute to $Z_+^0(3)$ and the blue paths to $Z_-^0(3)$.}
\end{figure}
To  obtain the multiplicity function for $j+1$-th strip we will take the same sets of path starting points $(w_1w_2)^i(0,0)$ and $w_1(w_1w_2)^i(0,0)$ as initial conditions  and place  another filter restriction $\mathcal{F}_{(j+1)l-1}^2$.
The weighted number of paths in the $j+1$-th strip will be given by
\begin{eqnarray}\label{zzz}
M^{j+1}_{(M,N)}(l)=2^{j-1}\Big(&\sum\limits_{k=0}^{\big[\frac{N-jl+1}{4l}\big]}P_j(k)\Big(Z_+^{2k}(j+1)-Z_-^{2k}(j+1)\Big)-\\
-&\sum\limits_{k=0}^{\big[\frac{N-(j+2)l+1}{4l}\big]}Q_j(k)\Big(Z_+^{2k+1}(j+1)-Z_-^{2k+1}(j+1)\Big)\Big),\nonumber
\end{eqnarray}
where

$Z_+^i(j+1)$ is the weighted number of paths from $(w_1w_2)^{i}(0,0)$ to $(M,N)$ for
$jl-1\leq M<(j+1)l-1$ in the presence of $\mathcal{F}_{jl-1}^1$ and $\mathcal{F}_{(j+1)l-1}^2$ , and 

$Z_-^i(j+1)$ is the weighted number  of paths  from $w_1(w_2w_1)^{i}(0,0)$ to $(M,N)$ in the presence of $\mathcal{F}_{jl-1}^1$  and  $\mathcal{F}_{(j+1)l-1}^2$. 

Now, we will write the expression for each summand explicitly in terms of binomial coefficients. It is clear that
$$Z_+^i(j+1)=
Z(L_N((-2il,0)\to (M,N)\;|\;\mathcal{F}_{jl-1}^1,\mathcal{F}_{(j+1)l-1}^{2})),$$  $$Z_-^i(j+1)=
Z(L_N((-2il-2,0)\to (M,N)\;|\;\mathcal{F}_{jl-1}^1,\mathcal{F}_{(j+1)l-1}^{2})).$$ 
Due to translation invariance similar to Remark  \ref{REM} we get
\begin{lemma}
The weighted number of lattice paths from $(-2il,0)$ to $(M,N)$ for $i\geq 0$ and with the filter restriction $\mathcal{F}_{l-1}^1$ and the filter restriction $\mathcal{F}_{2l-1}^2$ when $jl-1\leq M< (j+1)l-1$ is given by
\begin{equation}
\label{z1j}
Z_+^i(j+1)=\sum_{p=i}^{\bigl[\frac{N-jl+1}{2l}\bigl]} (-1)^{p-i}C^{(N)}_{M+2pl}+\sum_{p=i}^{\bigl[\frac{N-(j+1)l}{2l}\bigl]} (-1)^{p-i+1}C^{(N)}_{M-2(j+1+p)l+2},
\end{equation}
The weighted number of lattice paths  from $(-2il-2,0)$ to $(M,N)$  for $i\geq 0$  and with the filter restriction $\mathcal{F}_{l-1}^1$  and the filter restriction $\mathcal{F}_{2l-1}^2$ when $l-1\leq M< (j+1)l-1$ is given by
\begin{equation}
\label{z2j}
Z_-^i(j+1)=\sum_{p=i}^{\bigl[\frac{N-jl-1}{2l}\bigl]} (-1)^{p-i}C^{(N)}_{M+2pl-2}+\sum_{p=i}^{\bigl[\frac{N-(j+1)l-2}{2l}\bigl]} (-1)^{p-i+1}C^{(N)}_{M-2(j+1+p)l},
\end{equation}
where $C^{(N)}_M={N\choose\frac{N-M}{2}}$  is the number of unrestricted paths from $(0,0)$ to $(M,N)$.
\end{lemma}
Note that for $j+1=2$ (\ref{z1j}) and (\ref{z2j}) coincide with (\ref{th32}) and (\ref{th33}) respectively. The proof of this lemma is completely parallel to that of Theorem \ref{TH3}.

So we obtain
 \begin{equation*}
Z_+^i(j+1)-Z_-^i(j+1)=\sum_{p=i}^{\bigl[\frac{N-jl+1}{2l}\bigl]}(-1)^{p-i}F^{(N)}_{M+2pl}+\sum_{p=i}^{\bigl[\frac{N-(j+1)l}{2l}\bigl]} (-1)^{p-i}F^{(N)}_{M-2(j+1+p)l}.
\end{equation*}

Let us substitute the obtained expressions to (\ref{zzz}).

For the multiplicity function in the $j+1$-th strip we get 
\begin{eqnarray*}
\label{zzzk}
{M}^{j+1}_{(M,N)}(l)=2^{j-1}\sum_{k=0}^{\big[\frac{N-jl+1}{4l}\big]}P_j(k)\left(\sum_{p=2k}^{\bigl[\frac{N-jl+1}{2l}\bigl]}(-1)^{p-2k}F^{(N)}_{M+2pl}+\sum_{p=2k}^{\bigl[\frac{N-(j+1)l}{2l}\bigl]} (-1)^{p-2k}F^{(N)}_{M-2pl-2(j+1)l}\right)\\
-2^{j-1}\sum_{k=0}^{\big[\frac{N-(j+2)l+1}{4l}\big]}Q_j(k)\left(\sum_{p=2k+1}^{\bigl[\frac{N-jl+1}{2l}\bigl]}(-1)^{p-2k-1}F^{(N)}_{M+2pl}+\sum_{p=2k+1}^{\bigl[\frac{N-(j+1)l}{2l}\bigl]} (-1)^{p-2k-1}F^{(N)}_{M-2pl-2(j+1)l}\right).
\end{eqnarray*}

In the expression above we have the following set of terms
\begin{equation*}
F^{(N)}_{M},\; -F^{(N)}_{M+2l},\; F^{(N)}_{M+4l},\; \ldots,\; (-1)^{p}F^{(N)}_{M+2pl},\; \ldots,\; (-1)^{\bigl[\frac{N-jl+1}{2l}\bigl]}F^{(N)}_{M+2\bigl[\frac{N-jl+1}{2l}\bigl]l}
\end{equation*}
and, similarly,
\begin{equation*}
F^{(N)}_{M-2(j+1)l},\; \ldots,\; (-1)^{p}F^{(N)}_{M-2(j+1)l-2pl},\; \ldots,\; (-1)^{\bigl[\frac{N-(j+1)l}{2l}\bigl]}F^{(N)}_{M-2(j+1)l-2\bigl[\frac{N-(j+1)l}{2l}\bigl]l}
\end{equation*}
If we carefully recollect the terms, we get the following coefficients
\begin{equation*}
\Big(\sum_{n=0}^{\frac{p}{2}}P_j(n)+\sum_{n=0}^{\frac{p}{2}-1}Q_j(n)\Big)F^{(N)}_{M+2pl},\quad \text{for even $p$}
\end{equation*}
\begin{equation*}
-\Big(\sum_{n=0}^{\frac{p-1}{2}}P_j(n)+\sum_{n=0}^{\frac{p-1}{2}}Q_j(n)\Big)F^{(N)}_{M+2pl},\quad \text{for odd $p$}
\end{equation*}
\begin{equation*}
\Big(\sum_{n=0}^{\frac{p}{2}}P_j(n)+\sum_{n=0}^{\frac{p}{2}-1}Q_j(n)\Big)F^{(N)}_{M-2(j+1)l-2pl},\quad \text{for even $p$}
\end{equation*}
\begin{equation*}
-\Big(\sum_{n=0}^{\frac{p-1}{2}}P_j(n)+\sum_{n=0}^{\frac{p-1}{2}}Q_j(n)\Big)F^{(N)}_{M-2(j+1)l-2pl},\quad \text{for odd $p$}
\end{equation*}


	In order to obtain the desired result of the form
		\begin{eqnarray*}
	M^{j+1}_{(M,N)} = 2^{j-1}\Big(\sum_{k=0}^{\big[\frac{N-jl+1}{4l}\big]}P_{j+1}(k) F^{(N)}_{M+4kl}+\sum_{k=0}^{\big[\frac{N-(j+1)l}{4l}\big]}P_{j+1}(k)F^{(N)}_{M-4kl-2(j+1)l} -\\-\sum_{k=0}^{\big[\frac{N-(j+2)l+1}{4l}\big]}Q_{j+1}(k)F^{(N)}_{M+2l+4kl}-\sum_{k=0}^{\big[\frac{N-(j+3)l}{4l}\big]}Q_{j+1}(k)F^{(N)}_{M-4kl-2(j+2)l} \Big),
	\end{eqnarray*}
	it remains to check if the following recurrence relations
	\begin{equation}
	\label{p}
	P_{j+1}(k)=\sum_{n=0}^{k}P_{j}(n)+\sum_{n=0}^{k-1}Q_{j}(n),
	\end{equation}
	\begin{equation}
	\label{q}
	Q_{j+1}(k)=\sum_{n=0}^{k}P_{j}(n)+\sum_{n=0}^{k}Q_{j}(n).
	\end{equation}
    are satisfied by $P_j(k)$ and $Q_j(k)$, which we have assumed to hold for $j$-th strip:
		\begin{equation*}
	P_j(k)=\sum_{i=0}^{\big[\frac{j}{2}\big]}\binom{j-2}{2i}\binom{k-i+j-2}{j-2},\;\;\;\;\;
	Q_j(k)=\sum_{i=0}^{\big[\frac{j}{2}\big]}\binom{j-2}{2i+1}\binom{k-i+j-2}{j-2}.
	\end{equation*}

Indeed,\\
$$\sum_{n=0}^{k}P_{j}(n)+\sum_{n=0}^{k}Q_{j}(n)=\sum_{n=0}^{k}\sum_{i=0}^{\big[\frac{j}{2}\big]}\left(
\binom{j-2}{2i}+\binom{j-2}{2i+1}\right)\binom{n-i+j-2}{j-2}=$$
$$=\sum_{i=0}^{\big[\frac{j+1}{2}\big]}
\binom{j-1}{2i+1}\sum_{n=0}^{k}\binom{n-i+j-2}{j-2}
=\sum_{i=0}^{\big[\frac{j+1}{2}\big]}
\binom{j-1}{2i+1}\binom{k-i+j-1}{j-1}
= Q_{j+1}(k).
$$
The recurrence (\ref{p}) can be shown to hold in the same manner. 

Note that by the index $j$ this recurrence is similar to that of binomial coefficients, and by the variable $k$ it is similar to that of a number of integer points in tetrahedron \cite{LY}. 
 At the inductive step we have shown that the initial conditions of these recurrence relations are given by
	\begin{equation}
	\label{init}
	P_2(k) = 1,\quad Q_2(k) = 0\quad\forall k=0,1,\ldots
	\end{equation} 
The theorem is proven. 	
\end{proof}

\section{Conclusion}\label{conclusions}
In this paper we explored two-dimensional lattice path model with a periodic arrangement of multiple filter restrictions. We introduced two types of filters and counted numbers of paths descending from $(0,0)$ to $(M,N)$ in different configurations. We started by considering two filter restrictions and proceeded to multiple filters arranged periodically. Using reflection principle we obtained exact formulas for number of descending paths in considered configurations.\par
As it was mentioned earlier the filter appears naturally in the context of representation theory of quantum groups at roots of unity:
\begin{itemize}
	\item Lattice path model considered in the present paper and depicted in Figure \ref{figth3} can be used to obtain the model, weighted numbers of paths of which reproduce recurrence relations for the multiplicities in decomposition of tensor powers of fundamental representation of $U_q({\mathfrak{sl}_2})$ with divided powers, where $q$ is a root of unity. In order to do so one needs to consider all filters of type $1$ instead of the ones of type $2$ and add long steps. The resultant model is depicted in Figure \ref{LPMUQ}.
	\begin{figure}[h!]
	\centerline{\includegraphics[width=280pt]{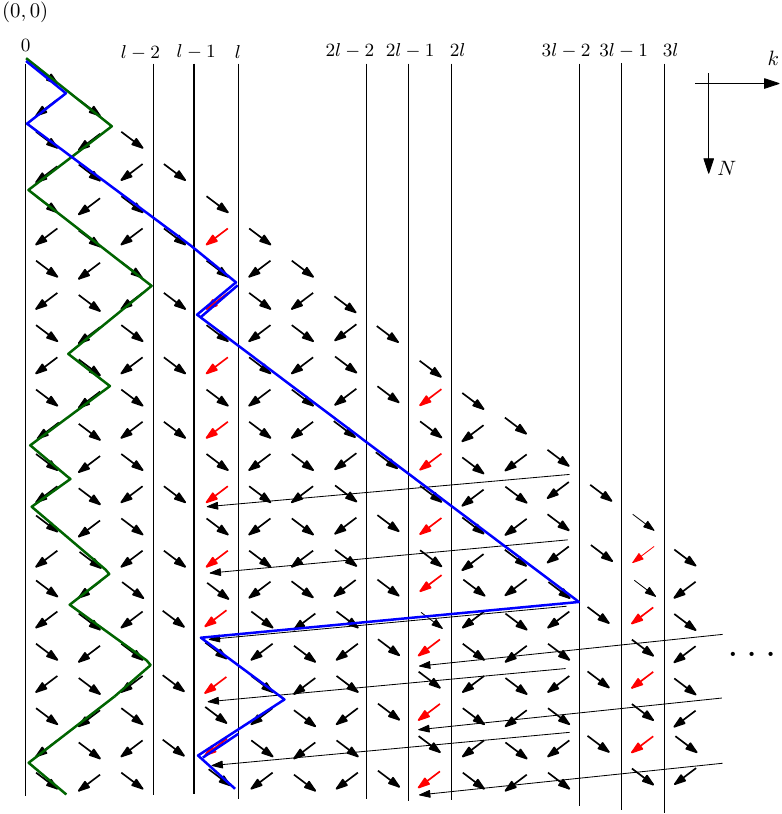}}
	\caption{Lattice path model reproducing multiplicities in tensor product decomposition of $U_q({\mathfrak{sl}_2})$ at roots of unity with divided powers. Horizontal axis corresponds to the highest weight of the component in tensor product decomposition, vertical axis corresponds to the considered tensor power. Examples of possible paths in such a walk highlighted in green and blue.}
	\label{LPMUQ}
	\end{figure}\par
	This can be seen as a folding transformation of the model considered in the present paper, which is schematically depicted in Figure \ref{folding1}.
	\begin{figure}[h!]
		\centerline{\includegraphics[width=250pt]{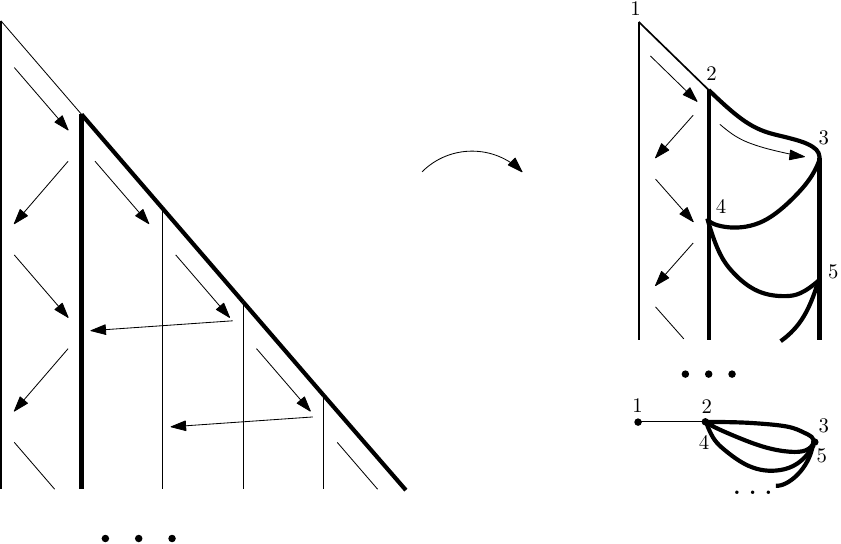}}
		\caption{Folding transformation of the lattice path model with periodic filters, leading to the model for $U_q({\mathfrak{sl}_2})$ with divided powers. Left caption is showing the initial lattice path model, right caption is showing the resultant one. Bottom-right caption is the view of the resultant lattice from above, where numbers are added for the purpose of showing layers.}
		\label{folding1}
	\end{figure}\par
	This model was studied in \cite{S} and its asymptotic analysis will be carried in \cite{LPRS}.
	\item Similarly to the application above, lattice path model considered in the present paper is also of use for deriving formulas for multiplicities in case of the small quantum group $u_q({\mathfrak{sl}_2})$. One needs to restrict it to the first two strips and add one sequence of long steps with double multiplicities. The resultant model is depicted in Figure \ref{LPMsmallUQ}.
		\begin{figure}[h!]
		\centerline{\includegraphics[width=250pt]{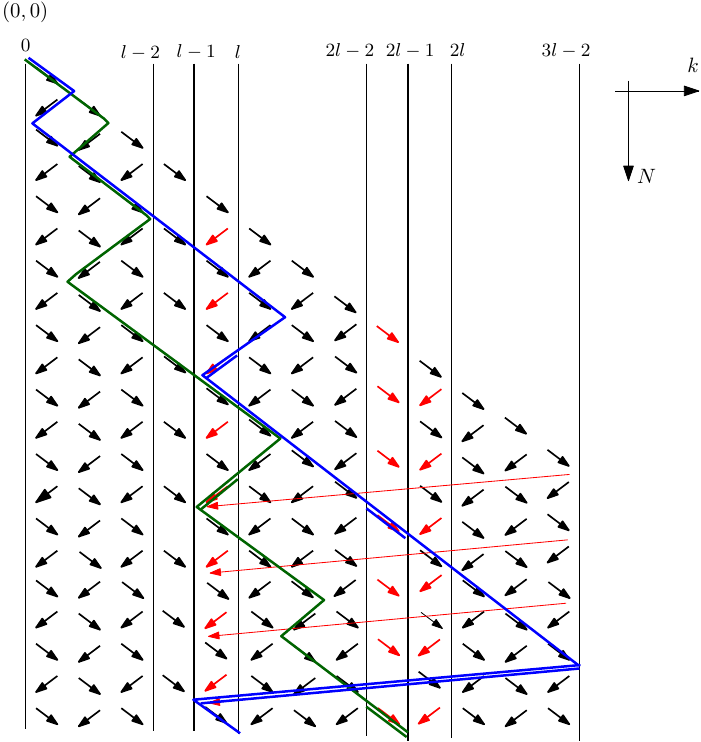}}
		\caption{Lattice path model reproducing multiplicities in tensor product decomposition of $u_q({\mathfrak{sl}_2})$. Horizontal axis corresponds to the highest weight of the component in tensor product decomposition, vertical axis corresponds to the considered tensor power. Examples of possible paths in such a walk highlighted in green and blue.}
		\label{LPMsmallUQ}
	\end{figure}\par
	This can be seen as a folding transformation of the model considered in the present paper with identification of the layers, which is schematically depicted in Figure \ref{folding2}.
	\begin{figure}[h!]
		\centerline{\includegraphics[width=250pt]{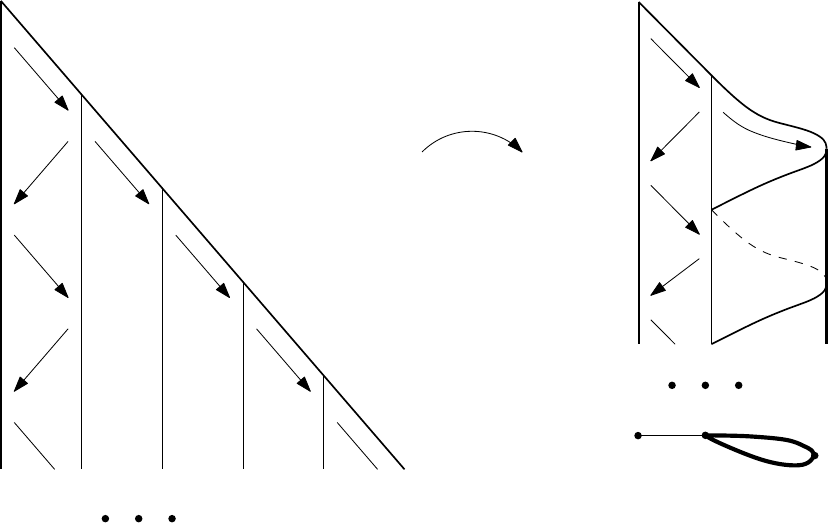}}
		\caption{Folding transformation of the lattice path model with periodic filters, leading to the model for $u_q({\mathfrak{sl}_2})$. Bottom-right caption is the view of the resultant lattice from above, all layers are identified into two.} 
		\label{folding2}
	\end{figure}\par
	Full analysis of this model will be carried in \cite{LPRS}.
	\item One can obtain $u_q({\mathfrak{sl}_2})$ from restricting $U_q({\mathfrak{sl}_2})$ with divided powers to $u_q^-U_q^0u_q^+$, where $u_q^\pm$ are subalgebras of the small quantum group $u_q({\mathfrak{sl}_2})$, generated by $F$ and $E$ resepctively, and $U^0_q$ is a central subalgebra of $U_q({\mathfrak{sl}_2})$, and then restricting $u_q^-U_q^0u_q^+$ to $u_q({\mathfrak{sl}_2})$. This gives another folding procedure for obtaining multiplicity formulas for the small quantum group $u_q({\mathfrak{sl}_2})$ from the lattice path model considered in the present paper.\par
	Firstly, one needs to branch the initial model into two identical ones, starting from the right boundary of the second strip, then proceed with adding long steps similarly to the model corresponding to $U_q({\mathfrak{sl}_2})$. As a result we obtain lattice path model reproducing multiplicities in tensor product decomposition of representations of $u_q^-U_q^0u_q^+$. This model is depicted in Figure \ref{LPMUUU}.
	\begin{figure}[h!]
		\centerline{\includegraphics[width=350pt]{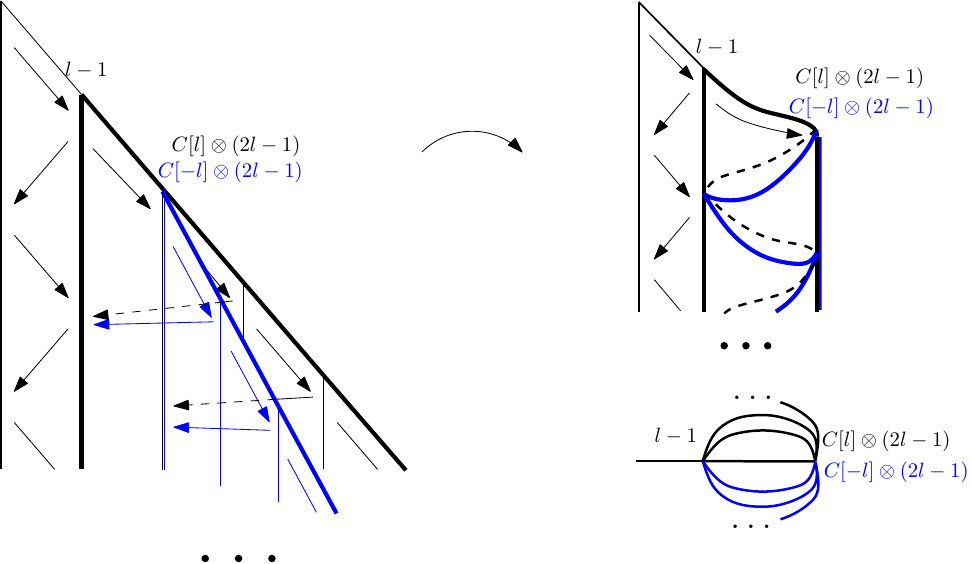}}
		\caption{Lattice path model reproducing multiplicities in tensor product decomposition of $u_q^-U_q^0u_q^+$. Left caption is showing branching, right caption shows folding. Bottom-right caption is the view of the resultant lattice from above.}
		\label{LPMUUU}
	\end{figure}\par
	Secondly, one needs to identify layers as in the rule depicted in Figure \ref{identif}, which corresponds to restriction of $u_q^-U_q^0u_q^+$ to $u_q({\mathfrak{sl}_2})$.
	\begin{figure}[h!]
		\centerline{\includegraphics[width=350pt]{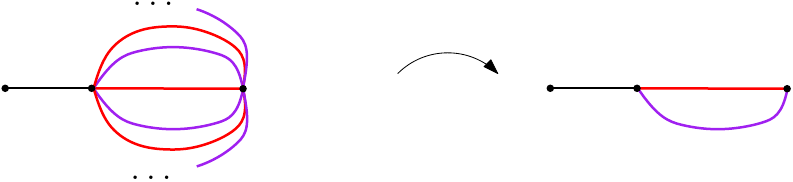}}
		\caption{Colors showing the rule of identification of the layers.}
		\label{identif}
	\end{figure}\par
 	This folding procedure gives the same formulas for multiplicities in tensor product decomposition for $u_q({\mathfrak{sl}_2})$, as the one depicted in Figure \ref{folding2}.
	\item Similar lattice path models emerge when studying category of tilting modules for $U_q({\mathfrak{sl}_2})$ when $q$ is an odd root of unity and the ground field is $\overline{\mathbb{F}_p}$ \cite{STWZ}.
	\item We expect that for lattice path models reproducing multiplicities in tensor product decomposition of $U_q(\mathfrak{sl}_n)$ at roots of unity, derivation of formulas for weighted numbers of paths will rely on similar combinatorial ideas: reflection principle involving Weyl group of the affine Kac-Moody algebra corresponding to $\mathfrak{sl}_n$. It is worth mentioning that obtaining such formulas explicitly is of interest for asymptotic representation theory, mainly, for constructing Plancherel measure and possibly obtaining its limit shape in different regimes, including regime when $n\to\infty$ (\cite{BOO},\cite{PR}, \cite{LPRS}).
\end{itemize}

\end{document}